\theoremstyle{plain}
\newtheorem{theorem}{Theorem}[section]
\newtheorem{lemma}[theorem]{Lemma}
\newtheorem{corollary}[theorem]{Corollary}
\newtheorem{proposition}[theorem]{Proposition}
\theoremstyle{definition}
\newtheorem{definition}[theorem]{Definition}
\newtheorem{example}[theorem]{Example}
\newtheorem{remark}[theorem]{Remark}
\theoremstyle{remark}
\newtheorem*{claim}{Claim}
\numberwithin{figure}{section}           
\numberwithin{equation}{section}
\DeclareMathOperator{\aut}{Aut}
\DeclareMathOperator{\Th}{Th}
\DeclareMathOperator{\autfL}{Autf_L}
\DeclareMathOperator{\galL}{Gal_L}
\DeclareMathOperator{\stab}{Stab}
\DeclareMathOperator{\rad}{rad}
\DeclareMathOperator{\cf}{cf}
\DeclareMathOperator{\tp}{tp}
\DeclareMathOperator{\cl}{cl}
\DeclareMathOperator{\id}{id}
\newcommand{\C}{{\mathfrak C}}
\newcommand{\N}{{\mathbb N}}
\newcommand{\Z}{{\mathbb Z}}
\newcommand{\Q}{{\mathbb Q}}
\newcommand{\R}{{\mathbb R}}
\newcommand{\MM}{{\mathbb M}}
\newcommand{\F}{{\mathbb F}}
\newcommand{\A}{{\mathcal A}}
\newcommand{\G}{{\mathcal G}}
\newcommand{\U}{{\mathcal U}}
\newcommand{\La}{{\mathrm L}}
\newcommand{\KP}{{\mathrm {KP}}}
\newcommand{\Sh}{{\mathrm {Sh}}}
\begin{document}

\title{Model theoretic connected components of groups}
\author{Jakub Gismatullin\thanks{The author is supported by the Polish Goverment MNiSW grant N N201 384134.}}
\date{\today}

\maketitle

\begin{abstract}
We give a general exposition of model theoretic connected components of groups. We show that if a group $G$ has NIP, then there exists the smallest invariant (over some small set) subgroup of $G$ with bounded index (Theorem \ref{thm:sh}). This result extends theorem of Shelah from \cite{sh2}. We consider also in this context the multiplicative and the additive groups of some rings (including infinite fields).
\end{abstract}

\footnotetext{2010 Mathematics Subject Classification: primary 03C60, 20A15, secondary 03C45, 12L12}
\footnotetext{Key words and phrases: groups with NIP, model theoretic connected components, thick sets}

\section*{Introduction}

Let $(G,\cdot,\ldots)$ be a group with some additional first order structure. In model theory we consider several kinds of \emph{model-theoretic connected components} of $G$. Assuming that $G$ is saturated, let $A\subset G$ be small set of parameters, and define (see Definition \ref{def:kp}): $G^{0}_A$ --- the connected component of $G$ over $A$, $G^{00}_A$ --- the type-connected component of $G$ over $A$ and $G^{\infty}_A$ --- the $\infty$-connected component of $G$ over $A$. If for every small $A$, $G^{\infty}_A = G^{\infty}_{\emptyset}$, then we call $G^{\infty}_{\emptyset}$ the $\infty$-connected component of $G$ and denote it by $G^{\infty}$ (we also say that $G^{\infty}$ exists in this case). Similarly we define the type-connected component $G^{00}$ of $G$ and the connected component $G^0$ of $G$.

The principal goal of this paper is to understand these three kinds of connected components in groups in basic algebraic examples (additive and multiplicative groups of fields and rings) and in important class of theories (NIP). This work continues research started in \cite{gis}.

$G^{00}$ has been studied widely in model theory, especially for $G$ definable in $o$-minimal structures \cite{NIP, pillay2}, in $p$-adically closed fields \cite{op} and more generally in NIP structures \cite{sh1, NIP, NIP2}. $G^{\infty}$ was considered as $G^{000}$ in the first version of \cite{NIP} and in \cite{NIP2}. 

The significance of $G^{00}$ has emerged through an influential conjecture of A. Pillay \cite{pillay} which asserts that if $G$ is a definably compact group definable in a saturated $o$-minimal structure, then the quotient $G/G^{00}$ is, when equipped with the ``logic topology'', a compact real Lie group whose dimension (as a Lie group) equals the dimension of $G$ as a definable set in an $o$-minimal structure. This conjecture, for $o$-minimal expansions of real closed fields, was proved in full in \cite{NIP}.

One of the motivations for considering $G^0_{\emptyset}$, $G^{00}_{\emptyset}$ and $G^{\infty}_{\emptyset}$, is the interplay between them and strong types. In model theory, types of elements of $G$ can often be thought as orbits on $G$ of the automorphism group $\aut(G)$. Various strong types (Lascar strong types, Kim-Pillay strong types and Shelah strong types) correspond to orbits of some canonical subgroups of $\aut(G)$. For instance, Lascar strong types are (or correspond to) orbits of $\autfL(G) \lhd \aut(G)$, i.e. the subgroup of Lascar strong automorphisms, which is generated by $\bigcup\{\aut(G/M) : M\text{ an elementary submodel of }G\}$. Strong types are essential to stability and simplicity theory --see e.g. \cite{pillay_book}. The groups $G^0_{\emptyset}$, $G^{00}_{\emptyset}$ and $G^{\infty}_{\emptyset}$ are important for characterising strong types in many structures (e.g. NIP structures). Indeed, in \cite[Section 3]{gis} we investigated the following construction: consider the following 2-sorted structure $\G = (G,X,\cdot)$, where $\cdot\colon G \times X \to X$ is a regular action of $G$ on $X$, and $X$ is a predicate (on $G$ we take its original structure). Then, Lascar, Kim-Pillay and Shelah strong types on the sort $X$ correspond exactly to orbits of $G^0_{\emptyset}$, $G^{00}_{\emptyset}$ and $G^{\infty}_{\emptyset}$ respectively on $X$. Also, $G/G^{\infty}_{\emptyset}$ with ``the logic topology'' is a quasi-compact topological group \cite[Proposition 3.7]{gis}, which can be seen as a canonical subgroup of \emph{the Lascar group} $\galL(\G)$ of the structure $\G$ (see \cite[Section 3]{gis}). The Lascar group is an abstractly defined invariant of first order theories of classical mathematical content (see \cite{pillay}).

The paper is organised as follows.

In the first Section we introduce some basic notation and prerequisites. 

In the second section we give a precise definition of all considered connected components, we collect their basic properties and prove some topological result regarding the closure of the invariant subgroups of the Lascar group. 

In section three we introduce the notion of thick subset of a group. Using this notion and result from section two we give a new characterisation/description of $G^{00}_A$ and $G^{\infty}_A$.

Section four is devoted (using results from section three) to study connected components of additive and multiplicative groups of some class of rings (including finitely-dimensional algebras over fields, e.g. matrix rings $\MM_n(K)$).

In the last section we prove (Theorem \ref{thm:sh}) that if a group $G$ has NIP, then $G^{\infty}$ exists. This extends a result of Shelah from \cite{sh2}, which provide existence of $G^{\infty}$ under NIP and commutativity assumption.

We assume that the reader is familiar with basic notions of model theory. The model-theoretic background can be found in \cite{Ma, pillay_book}.

We would like to thank the referee for many helpful suggestions.

This paper is a part of author's Ph.D. thesis written under supervision on L. Newelski.

\section{Basic Notation and Prerequisites}

$(G,\cdot,\ldots)$ is always a group with an additional first order structure in language $L$. Sometimes we assume that $G$ is already a sufficiently saturated model ($\overline{\kappa}$-saturated and $\overline{\kappa}$-strongly homogeneous for some large cardinal $\overline{\kappa}$). Usually $A\subset G$ is a small set of parameters.

A symmetric formula $\varphi(x,y)\in L(A)$ is \emph{thick} \cite[Section 3]{ziegler} \cite[Definition 1.10]{pillay} if for some $n\in\N$, for every sequence $(a_i)_{i<n}$ from $G$ (we do not require $a_0,\dots, a_{n-1}$ to be pairwise distinct) there exist $i<j<n$ such that $\varphi(a_i,a_j)$. By $\Theta_A$ we denote the conjunction of all thick formulas over $A$:
\[\Theta_A(x,y) = \bigwedge_{\varphi\text{ thick}}\varphi(x,y).\]

By $E_{\La/A}$, $E_{\KP/A}$ and $E_{\Sh/A}$ we denote equivalence relations of equality of Lascar, Kim-Pillay and Shelah strong types over $A$ respectively. $E_{\La/A}$ is the finest bounded $A$-invariant equivalence relation, $E_{\KP/A}$ is the finest bounded $\bigwedge$-definable over $A$ equivalence relation and $E_{\Sh/A}$ is intersection of all $A$-definable finite equivalence relations. We will need the following facts \cite[Lemma 7]{ziegler}: $E_{\La/A}$ is the transitive closure of $\Theta_A$; if $\Theta_A(a,b)$, then there is a small model $M\supseteq A$ such that $a\underset{M}{\equiv}b$; if for some small model $M\supseteq A$ we have $a\underset{M}{\equiv}b$, then $\Theta_A^2(a,b)$, i.e. there is $c$ such that $\Theta_A(a,c)\land\Theta_A(c,b)$.

By $\autfL(G)$ be denote the group of Lascar strong automorphisms of $G$, i.e. group generated by pointwise stabilisers of elementary submodels \[\autfL(G) = \big\langle \aut(G/M) : M \prec G \big\rangle.\] The \emph{Lascar (Galois) group} of the theory $T = \Th(G,\cdot,\ldots)$ of $G$ is the quotient $\galL(T) = \aut(G)/ \autfL(G)$. $\galL(T)$ is a quasi-compact topological group with some natural topology (see e.g. \cite[Section 1]{gis} or \cite[Section 4]{ziegler}).

For $H < \aut(G)$, by $E_H$ we consider the orbit equivalence relation defined as follows: $E_H(a,b)$ if and only if there is some $f\in H$ with $a=f(b)$.

For a group $G$ and a binary relation $E$ on $G$ we define the set of $E$-commutators \[X_E = \{a^{-1}b : a,b\in G, E(a,b)\}\] and the $E$-commutant $G_E$ as the subgroup of $G$ generated by $X_E$ \[G_E=\langle X_E \rangle <G.\]

For $a,b\in G$ and $A,B\subseteq G$ we make the following conventions: $a^b = b^{-1}ab$ and $A^B = \bigcup_{a\in A,b\in B}a^b$.

The first point of the next remark will be crucial in the proof Theorem \ref{thm:sh}.

\begin{remark} \label{rem:e} Let $(G,\cdot)$ be a group.
\begin{enumerate}
\item[(1)] If $E=E_H$ for some $H<\aut(G,\cdot)$, then \[(X_{E_H})^G \subseteq X_{E_H}^2\] and therefore $G_{E_H}\lhd G$.
\item[(2)] If $E$ is $\emptyset$-invariant, then $X_E$ and $G_E$ are also $\emptyset$-invariant.
\item[(3)] If $E$ is bounded, then $G_E$ has bounded index in $G$, moreover $[G : G_E]\leq |G/E|$.
\end{enumerate}
\end{remark}
\begin{proof}
$(1)$ Let $a,x \in G$ and $h\in H$. Then \[(X_{E_H})^x \ni(a^{-1}h(a))^x =(ax)^{-1}h(a)x = ((ax)^{-1}h(ax)) (h(x)^{-1}x)\in X_{E_H}^2.\] $(2)$ is obvious. $(3)$ follows from the observation: if $a^{-1}b\notin G_E$, then $\neg E(a,b)$.  
\end{proof} 

Let $G^*$ be a sufficiently saturated extension of $(G,\cdot,\ldots)$. Denote $X_{E_{\La/A}}$ by $X_{\La/A}$ and $G^*_{E_{\La/A}}$ by $G^*_{\La/A}$.

\begin{remark} \label{def:la}
 $G^*_{\La/A}$ is generated by $X_{\Theta_A} = \{a^{-1}b : a,b\in G^*\text{ and } \Theta_A(a,b)\}$.
\end{remark}
\begin{proof} $E_{\La/A}$ is the transitive closure of $\Theta_A$.
\end{proof}

\section{Connected components}

In this section $(G,\cdot,\ldots)$ is always a group with an additional first order structure in language $L$. We assume that $G$ is already a monster model ($\overline{\kappa}$-saturated and $\overline{\kappa}$-strongly homogeneous for some large cardinal $\overline{\kappa}$).

Here we define connected components $G^{\infty}_A$, $G^{00}_A$ and $G^{0}_A$ of $G$ relative to the set of parameters $A$ and also absolute components $G^{\infty}$, $G^{00}$ and $G^{0}$. Our approach follows Casanovas \cite[Section 5, Section 7]{cas} seminar paper.

\begin{definition} \label{def:kp}
\begin{itemize}
\item[(1)] $G^{0}_{A}   =  \bigcap\{ H < G : H \text{ is } A \text{-definable and } [G:H]<\omega \}$,
\item[(2)] $G^{00}_{A}  = \bigcap\{ H < G : H \text{ is } \bigwedge \text{-definable over } A \text{ and } [G:H]<\overline{\kappa} \}$,
\item[(3)] $G^{\infty}_{A} =  \bigcap\{ H < G : H \text{ is } A \text{-invariant and } [G:H]<\overline{\kappa}\}$.
\item[(4)] If for every small set of parameters $A\subset G$ \[G^{\infty}_A = G^{\infty}_{\emptyset},\] then we say that $G^{\infty}$ exists and define it as $G^{\infty}_{\emptyset}$. Similarly we define existence of $G^{00}$ and $G^{0}$.
\end{itemize}
\end{definition}

In \cite{sh1} there is another definition of ``existence of $G^{00}$'' in more general context of type definable group $G$: $G^{00}$ exists if and only if an arbitrary intersection of type definable subgroups of $G$ with bounded index (i.e. index $<\overline{\kappa}$) is a type definable subgroup also with bounded index (note that type definability is always assumed to be over small set of parameters, since every subset $X\subseteq G$ is type definable over $G\setminus X$). Since $A$-type-definable and $\aut(G)$-invariant subsets of $G$ are type definable over $\emptyset$, both of these definitions are equivalent.

In the next lemma we collect the basic properties of the groups $G^{\infty}_A$, $G^{00}_A$ and $G^{0}_A$ ($(1)$ and $(3)$ are known, see e.g. \cite[Section 5]{cas}). Groups $G^{0}_A$ and $G^{00}_A$ are $\bigwedge$-definable over $A$, and $G^{\infty}_A$ is $A$-invariant. $G^{00}_A$ is the smallest $\bigwedge$-definable over $A$ subgroup of $G$ of bounded index and similarly $G^{\infty}_A$ is the smallest $A$-invariant subgroup of $G$ of bounded index.

\begin{lemma} \label{lem:def}
\begin{enumerate}
\item[(1)] $[G:G^{\infty}_{A}] \leq 2^{|L(A)|}$
\item[(2)] $G^{\infty}_{A} = G_{\La/A} = \langle X_{\Theta_A} \rangle$
\item[(3)] $G^{\infty}_{A} \subseteq G^{00}_{A} \subseteq G^{0}_{A}$ are normal subgroups of $G$.
\end{enumerate}
\end{lemma}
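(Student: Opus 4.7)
The plan is to prove (2) first, derive (1) from it, and then handle (3). For (2), the equality $G_{\La/A} = \langle X_{\Theta_A} \rangle$ is exactly Remark \ref{def:la}. To show $G^{\infty}_A = G_{\La/A}$, one inclusion is immediate: $E_{\La/A}$ is a bounded $A$-invariant equivalence relation, so by parts (2) and (3) of Remark \ref{rem:e}, $G_{\La/A}$ is an $A$-invariant subgroup of bounded index, placing it in the intersection defining $G^{\infty}_A$, hence $G^{\infty}_A \subseteq G_{\La/A}$. For the reverse inclusion it suffices to check $X_{\Theta_A} \subseteq G^{\infty}_A$: given $a,b$ with $\Theta_A(a,b)$ we have $E_{\La/A}(a,b)$, and the coset relation $a \sim b \iff aG^{\infty}_A = bG^{\infty}_A$ on $G$ is bounded (since $[G:G^{\infty}_A]$ is) and $A$-invariant (since $G^{\infty}_A$ is), so by the minimality of $E_{\La/A}$ among such relations $a \sim b$, i.e.\ $a^{-1}b \in G^{\infty}_A$.

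For (1), Remark \ref{rem:e}(3) applied with $E = E_{\La/A}$ yields $[G:G^{\infty}_A] \le |G/E_{\La/A}|$. By Löwenheim--Skolem, fix a small model $M \supseteq A$ with $|M| \le |L(A)|$; by the facts recalled in Section 1, $a \equiv_M b$ implies $\Theta_A^2(a,b)$, and hence $E_{\La/A}(a,b)$. Therefore $|G/E_{\La/A}| \le |S(M)| \le 2^{|L(M)|} = 2^{|L(A)|}$.

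For (3), the inclusions follow directly from the definitions: every $A$-definable finite-index subgroup is $A$-type-definable of bounded index, and every such is in turn $A$-invariant of bounded index, so $G^{\infty}_A \subseteq G^{00}_A \subseteq G^0_A$. Normality of $G^{\infty}_A$ is free from (2) combined with Remark \ref{rem:e}(1), applied to $H = \autfL(G/A)$ whose orbit equivalence is exactly $E_{\La/A}$. For $G^{00}_A$ (and identically for $G^0_A$) I invoke the normal core: for each $g \in G$, $g^{-1} G^{00}_A g$ is $Ag$-type-definable of the same bounded index, and only boundedly many such conjugates are distinct (at most $[G:G^{00}_A]$ of them, indexed by $G/N_G(G^{00}_A)$), so $N = \bigcap_{g \in G} g^{-1} G^{00}_A g$ is type-definable over a small set and has bounded index; since $G^{00}_A$ is $\aut(G/A)$-invariant, so is $N$, and a standard compactness argument shows that an $\aut(G/A)$-invariant type-definable set is $A$-type-definable. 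Minimality of $G^{00}_A$ then forces $G^{00}_A \subseteq N$, which exhibits $G^{00}_A$ as normal. The main obstacle is the nontrivial inclusion in (2), which rests on the characterization of $E_{\La/A}$ as the finest bounded $A$-invariant equivalence relation; once that is in hand the rest is essentially bookkeeping, with a small additional observation needed for the normality of $G^{00}_A$ and $G^0_A$.
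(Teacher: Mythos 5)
Your argument for the reverse inclusion in (2) has a genuine gap. You declare that the coset relation of $G^{\infty}_A$ is bounded ``since $[G:G^{\infty}_A]$ is,'' but at that point nothing establishes this: $G^{\infty}_A$ is by definition an intersection of a family of subgroups each of bounded index, and such an intersection need not have bounded index unless one knows the family itself is bounded. Worse, in your layout the boundedness of $[G:G^{\infty}_A]$ would come from (1), which you derive from (2), so the argument is circular as structured. The paper closes exactly this hole before anything else: every $A$-invariant subset of $G$ is a union of sets that are $\bigwedge$-definable over $A$, and there are only boundedly many such sets (at most $2^{2^{|L(A)|}}$), so there are only boundedly many $A$-invariant subgroups; hence $G^{\infty}_A$ is a \emph{bounded} intersection of bounded-index subgroups and has bounded index. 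Alternatively, you can sidestep the issue entirely: for each individual $A$-invariant $H<G$ of bounded index, its coset relation is a bounded $A$-invariant equivalence relation, so $E_{\La/A}$ refines it, giving $X_{\Theta_A}\subseteq X_{E_{\La/A}}\subseteq H$ and hence $G_{\La/A}=\langle X_{\Theta_A}\rangle\subseteq H$; intersecting over all such $H$ yields $G_{\La/A}\subseteq G^{\infty}_A$ without ever mentioning the index of $G^{\infty}_A$. Either repair is short, but one of them is needed.

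The remainder is correct and close to the paper's route: your bound in (1) via a small model $M\supseteq A$ and $|S_1(M)|\leq 2^{|L(A)|}$ is the standard justification of the inequality the paper asserts, and your normality argument in (3) (normal core, boundedly many conjugates, invariance forcing type-definability over $A$, then minimality) is a mild variant of the paper's observation that one may replace each $H$ in the defining intersections by $\bigcap_{i<[G:H]}H^{g_i}$.
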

\begin{proof}
$(1)$, $(2)$ Since every $A$-invariant subset of $G$ is the union of some $\bigwedge$-definable over $A$ sets and since there are only boundedly many (i.e. $2^{|S_1(A)|} \leq 2^{2^{L(A)}}$) such sets, there are only boundedly many $A$-invariant subgroups of $G$. Thus $G^{\infty}_{A}$ has bounded index in $G$.

Consider the following equivalence relation: $E(a,b)\Leftrightarrow a^{-1}b\in G^{\infty}_{A}$. $E$ is an $A$-invariant bounded equivalence relation, so \[E_{\La/A} \subseteq E.\] Therefore $[G:G^{\infty}_{A}] = |G/E| \leq |G/E_{\La/A}| \leq 2^{|L(A)|}$ and by the definition of $E$, $G_{\La/A} \subseteq G^{\infty}_{A}$. On the other hand $G_{\La/A}$ is an $A$-invariant subgroup of $G$ of bounded index (because if $a^{-1}b\not\in G_{\La/A}$, then $\neg E_{\La/A}(a,b)$), so $G^{\infty}_{A} \subseteq G_{\La/A}$. Thus $G^{\infty}_{A} = G_{\La/A}$.

$(3)$ Note first the following useful remark from group theory: if $H$ is a subgroup of $G$, then \[\bigcap_{g \in G} H^g = \bigcap_{i<[G:H]} H^{g_i}, \] where $\{g_i\}_{i<[G:H]}$ is the set of representatives of all right cosets of $H$. We may therefore assume (by this remark), that $H \lhd G$ in the definitions of $G^{\infty}_{A}$, $G^{00}_{A}$ and $G^{0}_{A}$ (because e.g. if $H<G$ is $A$-definable of finite index, then $\bigcap_{g \in G} H^g \lhd G$ is also $A$-definable of finite index).
\end{proof}

If $H$ is definable or $\bigwedge$-definable subgroup $G$, then its trace on the set of types, well reflects some properties of $H$. For instance, if $H$ has finite index in $G$, then the same is true in every monster model of $\Th(G,\cdot,\ldots)$ ($\bigwedge$-definable subgroup of finite index is definable). In the next remark we prove the similar result for invariant subgroups of $G$.

\begin{remark} \label{rem:cor}
Let $H < G$ be an $A$-invariant subgroup of $G$ and $\widetilde{H} = \{\tp(h/A) : h \in H\}$. If $G'$ is elementary equivalent to $G$ over $A$, then \[\widetilde{H}(G') = \{g\in G' : \tp(g/A) \in \widetilde{H}\}\] is also a group, called \emph{the group corresponding to $H$ in $G'$}.

If the groups $G$ and $G'$ are two monster models of the same theory and $H < G$, $H' < G'$ are corresponding invariant subgroups, then $H$ has bounded index in $G$ if and only if $H'$ has bounded index in $G'$.
\end{remark}
\begin{proof} We show that $\widetilde{H}(G')$ is a group. $\widetilde{H}(G')$ is closed under taking inverse, because if $\tp(g/A) = \tp(h/A)$, then $\tp(g^{-1}/A) = \tp(h^{-1}/A)$, for $g\in G$, $h\in G'$. We prove that $\widetilde{H}(G')$ is closed under multiplication. Let $a,b \in H(G')$. Consider the type $p(x,y) = \tp(a,b/A)$. Let $(c,d)\in G^2$ realize $p$. Then $c,d$ are in $H$ (because e.g. $\tp(c/A) = \tp(a/A)\in \widetilde{H}$). Therefore $\tp(a,b/A) = \tp(c,d/A)$, so $\tp(a\cdot b/A) = \tp(c \cdot d/A)$. However $c \cdot d \in H$, hence $a\cdot b$ is in $\widetilde{H}(G')$.

For the second part, note that $H$ has bounded index if and only if $G^{\infty}_A \subseteq H$.
\end{proof}

If $G$ is a topological group, then the topological closure $\cl(H)$ of an arbitrary subgroup $H < G$ is also a subgroup. In our setting on the group $G$ (as well as on an arbitrary sort $G^k$) we can consider a compact topology taken from the set of types $S_k(A)$. Namely a subset $C$ of $G^k$ is closed if $C$ is $\bigwedge$-definable over $A$. Under this topology the multiplication $\cdot \colon G \times  G \rightarrow G$ is still continuous, however the topology on $G \times  G$ is not necessarily the product topology (like in Zariski topology on an algebraically closed field). Therefore it is no longer true, that the closure of an $A$-invariant subgroup is a subgroup. The simplest example is the following. Take a big (sufficiently saturated) algebraically closed field $K$ of characteristic $0$. Let $G = (K^{\times},\cdot)$ be the multiplicative group of $K$ with the structure consisting of the traces of all $\emptyset$-definable subset of the field $K$. As a $\emptyset$-invariant subgroup $H$ of $G$ take $H = (2^{\Z},\cdot)$. Then $H$ is not $\bigwedge$-definable, because \[\cl_{\emptyset}(H) = (K\setminus \widehat{\Q}) \cup H,\] where $\cl_{\emptyset}$ is the closure with respect to the topology taken from $\emptyset$-types and $\widehat{\Q}$ is the algebraic closure of $\Q$. In particular $\cl_{\emptyset}(H)$ is not a group (if $t\in K\setminus \widehat{\Q}$, then $t$ and $\frac{3}{t}$ is in $\cl_{\emptyset}(H)$, but $3\not\in \cl_{\emptyset}(H)$).

If $H$ is $A$-invariant subgroup of $G$ with bounded index, then $G^{\infty}_A \subseteq H$. Thus we may work with the image of $H$ in $G/G^{\infty}_A$. Using this approach (and some result on strong types from \cite{gis}) in Theorem \ref{thm:cl} $(ii)$ below, we prove that if additionally $H$ is a normal subgroup, then the topological closure $\cl_A(H)$ of $H$ in $G$ generates the group in 2 steps, i.e. $\cl_A(H)\cdot \cl_A(H)$ is a group. 

$G/G^{\infty}_A$ with the logic topology \cite[Proposition 3.5]{gis} \cite[Definition 3.1]{lascar} is a compact topological group (not necessarily Hausdorff).

\begin{remark}
Let $G'\prec G$ be a small model containing $A$ and \[i \colon G \longrightarrow G/G^{\infty}_A\] be the quotient map. Then $G/G^{\infty}_A$ with the logic topology (a subset $X\subseteq G/G^{\infty}_A$ is closed if and only of its preimage $i^{-1}[X] \subseteq G$ is $\bigwedge$-definable over $G'$) is a compact topological group. Moreover if $G^{00}_A = \bigcap_{j\in J}\varphi_j(G)$, then as a basis of open neighborhoods of the identity in this topology we may take the sets of the form \[ B_j = \{g/G^{\infty}_A : g\cdot G^{00}_A \subseteq \varphi_j(G)\} \text{ for } j\in J.\]
\end{remark}
\begin{proof}
For $j\in J$, $B_j$ is open in $G/G^{\infty}_A$, because \[i^{-1}[B_j^c] = \{g\in G : g\cdot G^{00}_A \cap \neg\varphi_j(G) \neq \emptyset\}.\] If $U\subseteq G/G^{\infty}_A$ is an open neighborhood of the identity $e$, then $\cl(e)\cap U^c = \emptyset$. Therefore $i^{-1}[\cl(e)]\cap i^{-1}[U^c] = \emptyset$ in $G/G^{\infty}_A$. The fact that $i^{-1}[U^c]$ is $\bigwedge$-definable over $G'$ implies existence of $j\in J$ satisfying \[\varphi_j[G]\cap i^{-1}[U^c] = \emptyset.\] This gives $B_j\subseteq U$.
\end{proof}

Applying the previous remark to our context, we see that if $G^{\infty}_A < H < G$ and $H$ is $A$-invariant, then the closure $\cl(i[H])$ in $G/G^{\infty}_A$ is a group. Hence \[i^{-1}\left[\cl(i[H])\right]\] is a $\bigwedge$-definable (over any small model) subgroup of $G$ containing $H$.

\begin{theorem} \label{thm:cl}
Let $H$ be an $A$-invariant subgroup of $G$ of bounded index (hence $G^{\infty}_A \subseteq H$). 
\begin{enumerate}
\item[(i)] $i^{-1}\left[\cl(i[H])\right]$ is the smallest $\bigwedge$-definable subgroup of $G$ containing $H$. In fact, $i^{-1}\left[\cl(i[H])\right]$ is $\bigwedge$-definable over any small model containing $A$.
\item[(ii)] If additionally $H\lhd G$ is a normal subgroup, then $\cl_A(H)$ generates $i^{-1}\left[\cl(i[H])\right]$ in $2$ steps, namely \[i^{-1}\left[\cl(i[H])\right] = \cl_A(H)\cdot X_{\Theta/A}\] is a normal subgroup of $G$.
\end{enumerate}
In particular $G^{00}_{A} = \cl_A(G^{\infty}_{A})\cdot X_{\Theta/A}$.
\end{theorem}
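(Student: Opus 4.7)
For part (i), the approach is to exploit the topological group structure of $G/G^\infty_A$ under the logic topology (as recalled in the preceding remark): since the closure of a subgroup in a topological group is itself a subgroup, $\cl(i[H])$ is a closed subgroup of $G/G^\infty_A$, and pulling back through $i$ yields that $K := i^{-1}[\cl(i[H])]$ is a $\bigwedge$-definable subgroup of $G$ containing $H$. The description of closed sets in the logic topology gives $\bigwedge$-definability over any small model containing $A$. For minimality, any $\bigwedge$-definable subgroup $K' \le G$ containing $H$ has bounded index, hence contains $G^\infty_A$; so $i[K']$ is a closed subgroup of $G/G^\infty_A$ containing $i[H]$. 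Therefore $\cl(i[H]) \subseteq i[K']$, giving $K \subseteq K'$.

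The easy inclusion in (ii), $\cl_A(H) \cdot X_{\Theta/A} \subseteq K$, is immediate from (i): $\cl_A(H) \subseteq K$ by minimality of $\cl_A(H)$ among $\bigwedge$-definable over $A$ supersets of $H$, and $X_{\Theta/A} \subseteq G^\infty_A \subseteq H \subseteq K$ by Lemma~\ref{lem:def}(2) together with Remark~\ref{def:la}; their product lies in the subgroup $K$. For the reverse inclusion $K \subseteq \cl_A(H) \cdot X_{\Theta/A}$, a saturation/compactness argument reduces the problem to showing that for every $g \in K$, every $\psi \in L(A)$ with $H \subseteq \psi(G)$, and every thick $\theta \in L(A)$, there exist $a,b$ with $\theta(a,b) \wedge \psi(gb^{-1}a)$; this reduction uses that $X_{\Theta/A}$ is itself $\bigwedge$-definable over $A$ (since $c \in X_{\Theta/A}$ iff $\exists a\,\theta(a,ac)$ for every thick $\theta$, and the conjunction of two thick formulas is thick by a Ramsey argument). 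The plan is to extract such $a,b$ from the logic-topological characterization $i(g) \in \cl(i[H])$, by choosing a suitable open neighbourhood $i(g) \cdot B_\varphi$ of $i(g)$ (with $\varphi \in L(A)$ containing $G^{00}_A$, per the preceding remark) whose intersection with $i[H]$ produces the required pair; normality of $H$ together with Remark~\ref{rem:e}(1) applied to $E_{\La/A}$ (giving $(X_{\La/A})^G \subseteq X_{\La/A}^2$) is what controls conjugates of $X_{\Theta/A}$-elements and lets the two factors fit into the form $\cl_A(H)\cdot X_{\Theta/A}$.

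The corollary $G^{00}_A = \cl_A(G^\infty_A) \cdot X_{\Theta/A}$ follows by specialising part~(ii) to $H = G^\infty_A$: the smallest $\bigwedge$-definable subgroup of $G$ containing $G^\infty_A$ is exactly $G^{00}_A$ (any such subgroup has bounded index, hence contains $G^{00}_A$, while $G^{00}_A$ itself is $\bigwedge$-definable of bounded index). The principal obstacle is the reverse inclusion in (ii): carrying out the construction of $a,b$ for given $\psi$ and $\theta$ requires careful bookkeeping involving the normality of $H$ and the delicate interplay of $G^\infty_A$, $G^{00}_A$ and $\Theta_A$ inside the logic topology — precisely where the result on strong types from \cite{gis} mentioned in the introduction is needed.
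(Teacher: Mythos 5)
Your part (i) and the derivation of the final display from (ii) are correct and essentially identical to the paper's argument: closed subgroups of the compact group $G/G^{\infty}_A$ pull back to $\bigwedge$-definable subgroups, and any $\bigwedge$-definable subgroup containing $H$ has bounded index, hence contains $G^{\infty}_A$ and descends to a closed subgroup of the quotient containing $i[H]$, which gives minimality.

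The problem is the reverse inclusion in (ii), which is the entire content of the theorem, and your own text concedes the point: you reduce to producing, for each $g\in i^{-1}\left[\cl(i[H])\right]$, each $A$-definable $\psi\supseteq H$ and each thick $\theta$, witnesses $a,b$ with $\theta(a,b)\wedge\psi(gb^{-1}a)$ --- and then describe a ``plan'' to extract them from the neighbourhood basis, deferring the actual construction to ``the result on strong types from \cite{gis}''. That construction does not fall out of the logic topology by itself. What the basis $B_j$ gives you from $i(g)\in\cl(i[H])$ is only that for every $j$ and every $A$-definable $\psi\supseteq H$ one has $g\in\psi(G)\cdot\varphi_j(G)^{-1}$, whence by compactness $g\in\cl_A(H)\cdot G^{00}_A$; this is circular, since $G^{00}_A$ is exactly the object the theorem is meant to describe as $\cl_A(G^{\infty}_A)\cdot X_{\Theta_A}$. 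The nontrivial claim is that a \emph{single} factor of $X_{\Theta_A}$ (the one-step Lascar generators, not the group $G^{\infty}_A$ they generate, and not its closure) suffices. The paper obtains this by passing to the two-sorted structure $N=(G,X,\cdot)$ with a regular action, identifying $\aut(N)=G\rtimes\aut(G)$, verifying that $H'=H\rtimes\autfL(G)$ is normal in $\aut(N)$ (here normality of $H$ and the containment $(X_{E_{\La/A}})^G\subseteq X_{E_{\La/A}}^2\subseteq H$ from Remark \ref{rem:e}(1) are used), and invoking \cite[Theorem 2.3(ii)]{gis}, which asserts that the preimage of the closure of $j[H']$ in $\galL(N)$ induces on the sort $X$ exactly the relation $\Theta\circ\overline{E_{H'}}$; a homeomorphism $S_2^{X}(\emptyset)\cong S_1^{G}(\emptyset)$ then converts $\overline{E_{H'}}$ into $\cl_{\emptyset}(H)$. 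That cited theorem (a relative of Newelski's bound on the diameter of Lascar strong types) is the missing ingredient; without proving it or an equivalent substitute, your argument for the hard inclusion does not close.
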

\begin{proof} $(i)$ If $\widetilde{H}$ is a $\bigwedge$-definable subgroup of $G$ containing $H$, then $\widetilde{H}/G^{\infty}_A$ is closed and contains $\cl(i[H])$. Hence $\widetilde{H}\supseteq i^{-1}\left[\cl(i[H])\right]$.

$(ii)$ The group $i^{-1}\left[\cl(i[H])\right]$ is $\bigwedge$-definable over $A$. Therefore $i^{-1}\left[\cl(i[H])\right]$ contains $\cl_A(H)$. Hence, by $(i)$ it is enough to show that $X_{\Theta/A}\cdot\cl_A(H)$ is a group. Essentially this follows from \cite[Theorem 2.3]{gis}. Here are the details. Extend the structure on $G$, to $N = (G,X,\cdot)$, where $\cdot\colon G\times X \rightarrow X$ is a regular action of $G$ on $X$ (considered in \cite[Section 3]{gis}). Without loss of generality assume that $A = \emptyset$ and $N$ is a monster model. By \cite[Proposition 3.3(1)]{gis}, $\aut(N)$ is a semidirect product of $G$ and $\aut(G)$. Namely, first fix an arbitrary point $x_0$ from $X$. Define the following embeddings \[\overline{\cdot}\colon \aut(G) \hookrightarrow \aut(N),\ \  \overline{\cdot}\colon G \hookrightarrow \aut(N),\] as follows: for $h,g\in G, f\in \aut(G)$ let $\overline{f}, \overline{g} \in \aut(N)$ be defined as \[\overline{f}|_{G}=f,\ \overline{f}(h\cdot x_0)=f(h)\cdot x_0,\ \overline{g}|_{G}=\id_{G},\ \overline{g}(h\cdot x_0)=(hg^{-1})\cdot x_0.\] One can easily prove that \[\overline{f}\circ \overline{g} = \overline{f(g)}\circ \overline{f},\ \ \overline{g}\circ \overline{f} = \overline{f}\circ \overline{f^{-1}(g)}, \tag{$\heartsuit$}\] so (see \cite[Proposition 3.3(1)]{gis}) \[\aut(N) = G \rtimes \aut(G)\] (in $\aut(N)$: $G\cap \aut(G) = \{0\}$ and $\aut(G)$ acts on $G$ by conjugation $\overline{g}^{\overline{f}} = \overline{f(g)} \in G$).

Define \[H' = H \rtimes \autfL(G).\] Then $H'$ is a normal subgroup of $\aut(N)$, because by $(\heartsuit)$ for $h\in H$, $e\in\autfL(G)$, $g\in G$, $f\in\aut(G)$: \[(\overline{h} \circ \overline{e})^{\overline{g} \circ \overline{f}} = \overline{f}^{-1} \circ \overline{g^{-1}\cdot h\cdot e(g)}\circ \overline{e\circ f} = \overline{f^{-1}(g^{-1}\cdot h\cdot e(g))}\circ \overline{f^{-1}\circ e\circ f}.\] Now $f^{-1}(g^{-1}\cdot h\cdot e(g)) = f^{-1}(h^g)\cdot f^{-1}(g^{-1} e(g)) \in H^G\cdot X_{E_{\La}} = H$, and $e^f\in\autfL(G)$.

Note that $H'$ contains all Lascar strong automorphisms of $N$, because by \cite[Proposition 3.3(3)]{gis} $\autfL(N) = G^{\infty}_{\emptyset} \rtimes \autfL(G)$.

Using $H'$ we can define an orbit equivalence relation $E_{H'}$ on $N$: \[E_{H'}(a,b)\ \Leftrightarrow \ a=f(b), \text{ for some }f\in H'.\] Applying \cite[Theorem 2.3(ii)]{gis}, to $H'$ we obtain that on the sort $X$ \[E_{\overline{\overline{H'}}} = \Theta \circ \overline{E_{H'}}, \tag{1}\] where $\overline{\overline{H'}}=j^{-1}[\cl(j[H'])]$ and $j\colon \aut(N) \to \galL(N)$ is a quotient map. $\overline{E_{H'}} = \cl_{\emptyset}(E_{H'})$ is the closure of $E_{H'}$ in $S_2^N(\emptyset)$ (the set of 2-types in $N$ over $\emptyset$). 

By $(1)$ and \cite[Lemma 3.7(1)]{gis} we have that \[\overline{\overline{H'}}\cdot x_0 = x_0/E_{\overline{\overline{H'}}} = X_{\Theta}\cdot\{y\in X : \overline{E_{H'}}(y,x_0)\}.\] It is enough to prove that \[\{y\in X : \overline{E_{H'}}(y,x_0)\} = \cl_{\emptyset}(H)\cdot x_0, \tag{2}\] because then, by regularity of the $G$ action on $X$ we conclude that $\overline{\overline{H'}}|_X = X_{\Theta}\cdot\ \cl_{\emptyset}(H)$ is a group.

There is a canonical homeomorphism between $S_2^{X}(\emptyset)$ and $S_1^{G}(\emptyset)$ given by: \[\tp(g_1x_0,g_2x_0) \ \ \longleftrightarrow\ \  \tp(g_1g_2^{-1})\ \  \text{ and } \ \ \tp(x_0,g\cdot x_0)\ \ \longleftrightarrow\ \ \tp(g).\] Namely, let $(x_1,x_2) = (g_1x_0, g_2x_0)\in {X}^2$ and $F = \overline{g}\overline{f}\in\aut(N)$ be an arbitrary automorphism. Then \[F(x_1,x_2) = F(g_1\cdot x_0, g_2\cdot x_0) = (f(g_1)g^{-1}\cdot x_0,f(g_2)g^{-1}\cdot x_0) = (g_1'\cdot x_0, g_2'\cdot x_0),\] where ${g_1'}{g_2'}^{-1} = f(g_1g_2^{-1})$, so ${g_1'}{g_2'}^{-1} \equiv g_1g_2^{-1}$ ($f\in\aut(G)$). On the other hand if $\tp(g_1) = \tp(g_2)$, then for every $x_1,x_2\in X$, \[\tp(x_1, g_1x_1) = \tp(x_2, g_2x_2),\] because $F(x_1, g_1x_1) = (x_2, g_2x_2)$, where $F = \overline{g}\overline{f}$ such that $f(g_1) = g_2$ and $g = {g_2'}^{-1}f(g_1')$.

Under this homeomorphism $E_{H'} \subseteq S_2^{X}(\emptyset)$ corresponds to $H \subseteq S_1^{G}(\emptyset)$. Thus if $y=g\cdot x_0$, then $\overline{E_{H'}}(x_0,y) \Leftrightarrow \tp(x_0,g\cdot x_0) \in \cl_{\emptyset}(E_{H'}) \Leftrightarrow \tp(g) \in \cl_{\emptyset}(H)$ and we get $(2)$.
\end{proof}

\section{Thick sets}

In this section we generally assume (unless otherwise is stated) that $(G,\cdot,\ldots)$ is an arbitrary group (not necessarily a monster model) with some additional $L$-structure. 

We introduce the notion of \emph{thick} set (this conception is based on the definition of thick formula). Using this notion and result from previous section, we give a characterisation (Theorem \ref{thm:opis}) of type definable closure of normal parameter-invariant subgroups of $G$ with bounded index (so also of $G^{00}_A$) and a new description of $X_{\Theta_A}$ (Lemma \ref{lem:thicks}), which will be useful in the next section.

\begin{definition} Let $(G,\cdot,\ldots)$ be a group, $X\subseteq G$ and $n \in \N$. \label{def:thick_set}
\begin{enumerate}
\item[(1)] We say that $X$ is \emph{right [left] $n$-generic} if at most $n$ right [left] translates of $X$ by elements of $G$ cover the whole group $G$. $X$ is \emph{generic} if it is right $n$-generic for some natural $n$.
\item[(2)] We call $X$ \emph{$n$-thick} if $X=X^{-1}$ and \[ (\forall g_0,\dots, g_{n-1}\in G)(\exists  i < j < n)\ \  g_i^{-1}\cdot g_j\in X\] (we do not require $g_0,\dots, g_{n-1}$ to be pairwise distinct). $X$ is \emph{thick} if it is $n$-thick for some natural $n$. The \emph{thickness} of $X$ is the smallest $n$ such that $X$ is $n$-thick.
\item[(3)] For definable set $X\subseteq G$ we define the formula $\varphi_X (x,y) = x^{-1}y\in X$.
\end{enumerate}
\end{definition}

An easy example of thick subset of the group is a subgroup of finite index. In Lemma \ref{lem:thick} below we collect the basic properties of thick sets. 

\begin{lemma} \label{lem:thick}
Let $(G,\cdot,\ldots)$ be a group, $X \subseteq G$ and $\varphi(x,y)\in L$.
\begin{enumerate}
\item[(1)] If $X$ is definable, then $X=X_{\varphi_X}$ and $\varphi \vdash \varphi_{X_{\varphi}}$.
\item[(2)] If $\varphi(x,y)$ is thick, then the set $X_{\varphi}$ and formulas $\varphi_{X_{\varphi}}$, $\varphi(x^{-1},y^{-1})$ are also thick. If $X$ is $A$-definable and $X = X^{-1}$, then $X$ is thick if and only if the formula $\varphi_X\in L(A)$ is thick.
\item[(3)] If $X$ is $A$-definable and thick, then in a sufficiently saturated extension $G^*$ of $G$ for every sequence $(a_i)_{i<\omega} \subseteq G^*$ that is $2$-indiscernible over $A$, $a_i^{-1}a_j\in X$, for every $i<j<\omega$.
\item[(4)] If $X$ is $n$-thick, then $X$ is right and left $n$-generic. If $X$ is right [left] $n$-generic, then $X^{-1}X$ [$XX^{-1}$ respectively] is $(n+1)$-thick.
\end{enumerate}
\end{lemma}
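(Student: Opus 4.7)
The plan is to treat the four parts separately; each reduces to either a direct definitional unwinding or a short combinatorial argument.

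For (1), $X_{\varphi_X} = \{a^{-1}b : a^{-1}b\in X\}$, which equals $X$ (for ``$\supseteq$'' take $a=e$); and $\varphi \vdash \varphi_{X_\varphi}$ is immediate since $\varphi(a,b)$ is itself the witness to $a^{-1}b \in X_\varphi$. For (2), thickness transfers by reusing witnessing sequences: given $g_0,\ldots,g_{n-1}$, thickness of $\varphi$ yields $i<j$ with $\varphi(g_i,g_j)$, hence $g_i^{-1}g_j \in X_\varphi$; symmetry $X_\varphi = X_\varphi^{-1}$ follows from symmetry of $\varphi$. For $\varphi(x^{-1},y^{-1})$, feed in the inverted sequence $g_0^{-1},\ldots,g_{n-1}^{-1}$ to $\varphi$. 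Thickness of $\varphi_{X_\varphi}$ then follows from thickness of $X_\varphi$ together with the second sentence of (2), which is nothing but a direct comparison of the two $n$-thickness conditions once one notes that symmetry of $\varphi_X$ matches $X = X^{-1}$. For (3), apply $n$-thickness to $a_0,\ldots,a_{n-1}$ to obtain some $i<j<n$ with $a_i^{-1}a_j \in X$; since $X$ is $A$-definable, membership in $X$ is determined by $\tp(x,y/A)$, and $2$-indiscernibility over $A$ then promotes the conclusion to every pair $(a_{i'},a_{j'})$ with $i'<j'<\omega$.

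For (4) I handle the two directions separately. For $n$-thick $\Rightarrow$ $n$-generic, argue by contradiction: assuming $X$ is not left $n$-generic, pick $g_0 \in G$ arbitrary and then greedily $g_i \notin g_0 X\cup\ldots\cup g_{i-1}X$ for each $i<n$, which is possible because failure of $n$-genericity entails failure of $k$-genericity for every $k<n$. Then $g_j \notin g_iX$, i.e.\ $g_i^{-1}g_j \notin X$, for all $i<j<n$, contradicting $n$-thickness; the right case is symmetric (and in any case follows from $X = X^{-1}$). For the generic-implies-thick direction, suppose $g_1 X\cup\ldots\cup g_n X = G$ and take any $a_0,\ldots,a_n$; by pigeonhole some $a_i,a_j$ with $i<j$ lie in a common translate $g_k X$, so $a_i = g_k x_i$, $a_j = g_k x_j$ with $x_i,x_j\in X$, and then $a_i^{-1}a_j = x_i^{-1}x_j \in X^{-1}X$. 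Symmetry $(X^{-1}X)^{-1} = X^{-1}X$ is automatic, and the case of right translates is analogous, producing an element of $XX^{-1}$.

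I do not foresee any real obstacle: every assertion is either a rewriting of definitions or one of two standard moves (pigeonhole applied to translates covering $G$, and greedy selection against a non-covering family). The only bookkeeping point requiring a little care is keeping the conventions for right/left translate consistent with which of $X^{-1}X$ or $XX^{-1}$ appears in the conclusion.
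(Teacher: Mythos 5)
Your proposal is correct and follows essentially the same route as the paper: parts (1)--(3) are the same definitional unwindings (the paper only writes out (3) explicitly), and both halves of (4) rest on the same two moves the paper uses, namely a maximal/greedy sequence with $g_i^{-1}g_j\notin X$ whose translates cover $G$, and pigeonhole applied to $n$ translates versus $n+1$ points. Your contrapositive phrasing of ``thick implies generic'' is just the mirror image of the paper's direct maximal-sequence construction, and your closing caveat about matching the left/right translate convention to $X^{-1}X$ versus $XX^{-1}$ is exactly the only point of care here.
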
 
\begin{proof} $(1)$ and $(2)$ are easy. 

$(3)$ By definition $a_i^{-1}a_j\in X$ holds, for some $i<j<\omega$. By $2$-indiscernibility $a_i^{-1}a_j\in X$ holds for every $i<j<\omega$.

$(4)$ Let $g_0,\ldots,g_{k}\in G$ be a maximal sequence for which $g_i^{-1}g_j \notin X$, for every $i<j<n$. Take an arbitrary $g\in G$ and consider the sequence $g_0,\ldots,g_{k},g$. By assumption there is $i<n$ satisfying $g_i^{-1}g\in X$, thus $g\in g_iX$. Thus $G = \bigcup_{i<k+1} g_iX$ and $G=G^{-1} = \bigcup_{i<k+1} Xg_i^{-1}$ ($X=X^{-1}$).

Assume that $G = \bigcup_{i<n} h_iX$ and $g_0,\ldots,g_{n}\in G$. Then for some $i<j<n$ there is $k<n$ such that $g_i,g_j \in h_kX$. Therefore $g_i^{-1}g_j\in X^{-1}X$.
\end{proof}

The notion of ``genericity'' is classical in model theory. By Lemma \ref{lem:thick}$(4)$, '`thickness'' and ``genericity'' are very close to each other.

Thick formulas over $A$ were needed to define the relation $\Theta_{A}$, the transitive closure of which is $E_{\La/A}$. Similarly, an intersection of all $A$-definable thick sets in a monster model is $X_{\Theta_{A}}$, which generates $G_{\La/A} = G^{\infty}_{A}$ (Lemma \ref{lem:def}$(2)$).

\begin{lemma} \label{lem:thicks}
Let $(G,\cdot,\ldots)$ be a monster model, $A\subset G$ be small and $n \in \N$. Then
 \[X_{\Theta_A}^n = \bigcap \{P^n : P\subseteq G \text{ is $A$-definable and thick}\}.\]
\end{lemma}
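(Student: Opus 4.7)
The proof is a standard compactness argument, built on the fact that a finite conjunction of thick $L(A)$-formulas is again thick.

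The inclusion $\subseteq$ is immediate from Lemma~\ref{lem:thick}(2): for any $A$-definable thick $P$, the formula $\varphi_P(x,y)$ is a thick $L(A)$-formula, so every pair $(a,b)$ with $\Theta_A(a,b)$ also satisfies $\varphi_P(a,b)$, that is $a^{-1}b \in P$. Hence $X_{\Theta_A} \subseteq P$, and therefore $X_{\Theta_A}^n \subseteq P^n$ for every such $P$.

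For the reverse inclusion, fix $g$ in the right hand side and consider the partial type over $A\cup\{g\}$
\[
\Sigma(x_1,y_1,\ldots,x_n,y_n) = \{x_1^{-1}y_1\cdots x_n^{-1}y_n = g\} \cup \bigcup_{i=1}^{n}\{\varphi(x_i,y_i):\varphi\in L(A)\text{ thick}\}.
\]
Any realisation $(a_i,b_i)_{i\leq n}$ of $\Sigma$ satisfies $\Theta_A(a_i,b_i)$ for each $i$, so $h_i := a_i^{-1}b_i \in X_{\Theta_A}$ and $g = h_1\cdots h_n \in X_{\Theta_A}^n$. Since $G$ is a monster model it is enough to check that $\Sigma$ is finitely satisfiable. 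A finite fragment of $\Sigma$ mentions only finitely many thick formulas $\varphi_1,\ldots,\varphi_m$; granting that $\varphi := \bigwedge_{j\leq m}\varphi_j$ is again thick, the set $X_\varphi$ is $A$-definable and thick, so the hypothesis on $g$ gives $g \in X_\varphi^n$, and picking a decomposition $g = h_1\cdots h_n$ together with witnesses $h_i = a_i^{-1}b_i$ satisfying $\varphi(a_i,b_i)$ yields a realisation of the fragment.

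The main obstacle is therefore to verify that $\bigwedge_j\varphi_j$ is thick. I would prove this by a direct Ramsey argument: since each $\varphi_j$ is symmetric, any sequence $(a_i)_{i<N}$ induces a well-defined $2^m$-colouring of its unordered pairs by the truth values of $(\varphi_1(a_i,a_j),\ldots,\varphi_m(a_i,a_j))$. Choosing $N$ large enough that Ramsey produces a monochromatic subset larger than the thickness bound of each $\varphi_j$, the thickness of $\varphi_j$ forces at least one pair in that subset to satisfy $\varphi_j$, and monochromaticity then forces every pair to satisfy $\varphi_j$; so every pair of the monochromatic subset satisfies $\bigwedge_j\varphi_j$, as required.
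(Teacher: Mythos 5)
Your proof is correct and follows essentially the same route as the paper, which simply invokes Lemma~\ref{lem:thick}(1),(2) together with compactness. The Ramsey argument you supply for the thickness of a finite conjunction of thick formulas is exactly the standard fact the paper leaves implicit (it is needed for $\Theta_A$ to be consistent at all), so your write-up just makes the compactness step explicit.
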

\begin{proof}
Let first $n=1$ and $A = \emptyset$. By definition $X_{\Theta} = \{a^{-1}b : \Theta(a,b)\}$. Now it is enough to use Lemma \ref{lem:thick}$(1), (2)$ (every $A$-definable thick set corresponds to a thick formula over $A$) and compactness. The case $n>1$ follows from compactness.
\end{proof}

Using thick sets we can describe (as in Theorem \ref{thm:cl}) the closure of a normal invariant subgroup of bounded index with respect to the topology on $G/G^{\infty}_A$ .

\begin{theorem} \label{thm:opis} Let $(G,\cdot,\ldots)$ be a monster model and $H\lhd G$ be an $A$-invariant normal subgroup of bounded index. Then (using notation from Theorem \ref{thm:cl}) the closure of $H$ with respect to the topology on $G/G^{\infty}_A$ is \[i^{-1}\left[\cl(i[H])\right] = \bigcap\{P\cdot Q : P,Q\subseteq G \text{ are } A\text{-definable thick and } P\supseteq H\cup Q\}.\] In particular $G^{00}_A = \bigcap\{P\cdot Q : P,Q\subseteq G \text{ are } A\text{-definable thick and } P\supseteq G^{\infty}_A \cup Q\}$.
\end{theorem}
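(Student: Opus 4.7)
The plan is to leverage Theorem \ref{thm:cl}(ii), which already identifies $i^{-1}\left[\cl(i[H])\right]$ with the product $\cl_A(H) \cdot X_{\Theta/A}$, and then rewrite both factors as directed intersections of $A$-definable thick sets so that the product can be unfolded via saturation.

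Two preparatory facts supply the bases. First, every $A$-definable symmetric set $P$ containing $H$ is automatically thick: otherwise, for each $\lambda < \overline{\kappa}$ the partial type $\{g_\alpha^{-1} g_\beta \notin P : \alpha < \beta < \lambda\}$ is consistent, hence realised by $\overline{\kappa}$-saturation, and since $P \supseteq H$ the realising elements lie in $\lambda$ distinct right cosets of $H$, contradicting the bounded index. Replacing each $A$-definable $P \supseteq H$ by $P \cap P^{-1}$ therefore gives $\cl_A(H) = \bigcap \mathcal{A}$, where $\mathcal{A}$ denotes the class of $A$-definable thick supersets of $H$. Second, thick sets are closed under finite intersection: colouring each ordered pair $(a,b)$ in a long sequence by $(\varphi_1(a,b), \varphi_2(a,b)) \in \{0,1\}^2$ and invoking Ramsey yields a monochromatic subsequence of arbitrary length, whose colour cannot contain a $0$ in either slot without contradicting the thickness of the corresponding $\varphi_k$, so $\varphi_1 \wedge \varphi_2$ is thick. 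Combined with Lemma \ref{lem:thicks} (case $n=1$), this yields $X_{\Theta/A} = \bigcap \mathcal{B}$, where $\mathcal{B}$ is the class of $A$-definable thick sets; both $\mathcal{A}$ and $\mathcal{B}$ are closed under finite intersection.

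Inclusion $(\subseteq)$ of the stated identity is then immediate: if $x = yz$ with $y \in \cl_A(H) \subseteq P$ and $z \in X_{\Theta/A} \subseteq Q$, then $x \in P \cdot Q$ for every admissible pair $(P,Q)$. For the reverse inclusion, assume $x$ lies in every such product and consider the partial type $\Sigma(y,z)$ over $A \cup \{x\}$ expressing ``$y \in \cl_A(H)$, $z \in X_{\Theta/A}$, and $yz = x$''. To check finite satisfiability, fix $P_1, \ldots, P_m \in \mathcal{A}$ and $Q_1, \ldots, Q_n \in \mathcal{B}$, set $P^* = \bigcap_i P_i \in \mathcal{A}$ and $Q^* = \bigcap_j Q_j \in \mathcal{B}$, and apply the hypothesis to the pair $(P, Q) = (P^*, P^* \cap Q^*)$: this $Q$ is thick as a finite intersection of thick sets, and $P^* \supseteq H \cup (P^* \cap Q^*)$ holds by construction. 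Thus $x \in P^* \cdot (P^* \cap Q^*)$, furnishing witnesses $y \in P^* \subseteq \bigcap_i P_i$ and $z \in Q^* \subseteq \bigcap_j Q_j$ with $yz = x$. Saturation then realises $\Sigma$ and decomposes $x$ as required. The ``in particular'' clause follows by applying the established identity to $H = G^{\infty}_A$, which is $A$-invariant, normal, and of bounded index by Lemma \ref{lem:def}.

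The main subtlety is the choice $Q = P^* \cap Q^*$ in the finite-satisfiability step: the hypothesis requires $P \supseteq Q$, and this particular $Q$ makes that containment automatic while still forcing the second factor of the decomposition to lie inside $Q^*$.
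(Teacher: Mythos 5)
Your proof is correct and follows essentially the same route as the paper: both start from Theorem \ref{thm:cl}(ii), write $\cl_A(H)$ and $X_{\Theta_A}$ as directed intersections of $A$-definable thick sets (the former using that $A$-definable symmetric supersets of $H$ are automatically thick, the latter using Lemma \ref{lem:thicks}), distribute the product over the intersections by compactness, and recover the condition $P\supseteq Q$ by replacing $Q$ with $Q\cap P$. The paper presents this as a terse chain of equalities; your version merely spells out the saturation argument and the closure of thick sets under finite intersection.
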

\begin{proof} When the set $P$ is $A$-definable and contains $H$, then $P\cup P^{-1}$ is thick (otherwise by compactness, $H$ would be of unbounded index). Therefore by Theorem \ref{thm:cl} and compactness we have 
\begin{align*}
i^{-1}\left[\cl(i[H])\right] &= \cl_A(H)\cdot X_{\Theta_A} \\
&= \bigcap\{P : H\subseteq P \text{ is $A$-definable, thick}\}\cdot \bigcap\{Q : Q \text{ is $A$-definable, thick}\} \\
&= \bigcap\{P\cdot Q : P,Q \text{ are $A$-definable, thick and } P\supseteq H\} \\
&= \bigcap\{P\cdot (Q\cap P) : P,Q \text{ are $A$-definable, thick and } P\supseteq H\}.
\end{align*}
\end{proof}

When $L_0$ is a sublanguage of $L$, then by $G|_{L_0}$ we denote the reduct of $G$ to $L_0$. The next remark is standard. The proof of $(3)$ is new. 

\begin{remark} \label{rem:ccc}
\begin{enumerate}
\item[(1)] $G^{0}_A = \bigcap\left\{(G|_{L_0})^{0}_{A_0} : A_0\subseteq A, L_0\subseteq L \text{ are finite}\right\}$
\item[(2)] $G^{00}_A = \bigcap\left\{(G|_{L_0})^{00}_{A_0} : A_0\subseteq A, L_0\subseteq L \text{ are countable}\right\}$
\item[(3)] $G^{\infty}_A = \bigcap\left\{(G|_{L_0})^{\infty}_{A_0} : A_0\subseteq A, L_0\subseteq L\text{ are countable}\right\}$
\end{enumerate}
\end{remark}
\begin{proof} In all cases inclusion $\subseteq$ is obvious. We prove $\supseteq$.

$(1)$ If $a\not\in G^{0}_A$, then for some $A$-definable $H < G$ of finite index, $a\not\in H$. Take $L_0(A_0)$ as a language in which $H$ is defined. Then $a\not\in (G|_{L_0})^{0}_{A_0}$.

$(2)$ Every $\bigwedge$-definable bounded equivalence relation is a conjunction of a family of thick formulas. Similarly every $\bigwedge$-definable subgroup of bounded index is an intersection of a family of thick sets. Assume that $a\not\in G^{00}_A$ and let $\{Q_j\}_{j<\kappa}$ be the family of $A$-definable thick sets such that \[G^{00}_A = \bigcap_{j<\kappa}Q_j \text{ and } \forall j<\kappa\  \exists i<\kappa\ Q_i^2 \subseteq Q_j.\] Then $a\not\in Q_{j_0}$, for some $j_0<\kappa$. Define by induction a subfamily $\{Q_{j_n}\}_{j<\omega}$ with $Q_{j_{n+1}}^2 \subseteq Q_{j_n}$. Let $L_0(A_0)$ be a countable language over which all $Q_{j_n}$ are definable. Then $a\not\in \bigcap_{n\in \N} Q_{j_n} \supseteq (G|_{L_0})^{00}_{A_0}$.

$(3)$ By description of $G^{\infty}_A$ from Lemmas \ref{lem:def}$(2)$ and \ref{lem:thicks}, condition $a\not\in G^{\infty}_A$ is equivalent to the following
\begin{quote}
for every natural $n$, there is an $A$-definable thick subset $P_n$ of $G$ such that $a\not\in P_n^n$.
\end{quote}
Now if all $P_n$'s are definable in a countable language $L_0(A_0)$, then by the same lemmas $a\not\in (G|_{L_0})^{\infty}_{A_0}$.
\end{proof}

The next lemma shows a relationship between generic sets (so also thick sets by Lemma \ref{lem:thick}$(4)$) and subgroups of finite index.

\begin{lemma} \label{lem:bound}
If $P\subseteq G$ is symmetric and $m$-generic, then $P^{3m-2}$ is a subgroup of $G$ of finite index $\leq m$.
\end{lemma}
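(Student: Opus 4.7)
Let $H := P^{3m-2}$ and fix a cover $G = \bigcup_{i=1}^{m} g_i P$. I aim to show $H$ is a subgroup of $G$ of index at most $m$. Symmetry $H^{-1} = H$ is immediate from $P = P^{-1}$; for $e \in H$ and the fundamental inclusion $P^2 \subseteq H$ I use that $e = p p^{-1} \in P^2$ and pad out to length $3m-2$ using cancelling pairs, with the case that the required padding has odd length handled by a parity argument (when $m$ is odd, $m$-genericity forces $\langle P^2 \rangle = \langle P \rangle$, so odd-length representations of $e$ also exist).

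For closure $H \cdot H \subseteq H$, equivalently $P^{6m-4} \subseteq P^{3m-2}$, I use a pigeonhole compression on partial products. Given $x = p_1 \cdots p_n \in P^n$, set $x_0 = e, x_1 = p_1, \ldots, x_n = x$. If two partial products $x_a, x_c$ with $a < c$ lie in the same translate $g_i P$, then $x_a^{-1} x_c \in P^{-1} \cdot P = P^2$; writing $x_a^{-1} x_c = q_1 q_2$ with $q_1, q_2 \in P$ yields a rewrite $x = p_1 \cdots p_a q_1 q_2 p_{c+1} \cdots p_n$ of length $n + 2 - (c-a)$, which is strictly shorter when $c - a \ge 3$. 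Since $n+1$ partial products lie in $m$ translates, pigeonhole forces some translate to contain at least $\lceil (n+1)/m \rceil$ of them; once $n \ge 3m$ this number is at least $4$, some same-coset pair has index-distance $\ge 3$, and the compression applies. Iterating from $n = 6m - 4$ drives the length strictly down to $n \le 3m - 1$.

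The index bound then follows at once: $P^2 \subseteq H$ gives $p^{-1} q \in H$ for all $p, q \in P$, so $P \subseteq pH$ for any fixed $p \in P$, and
\[
G = \bigcup_{i=1}^{m} g_i P \subseteq \bigcup_{i=1}^{m} g_i p H
\]
is a union of at most $m$ left cosets of $H$.

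The main obstacle is the final step of the compression, from $n = 3m - 1$ down to $3m - 2$. At $n = 3m - 1$ the pigeonhole becomes tight: the $3m$ partial products can be packed into $m$ blocks of three consecutive indices per coset, leaving no same-coset pair at index-distance $\ge 3$, so the naive reduction stalls. To bridge this I plan to exploit the additional structural fact that $x_0 = e$ lies in the first block's coset, forcing its representative $g_{c_1}$ itself to lie in $P$; combining this with a parity comparison between the starting length $6m - 4$ (always even) and the target $3m - 2$ (with parity that of $m$) should produce the missing single-step reduction and close the argument.
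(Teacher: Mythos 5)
Your closure argument has a genuine, and in fact self-acknowledged, gap exactly where the whole difficulty of the lemma sits: the passage from length $3m-1$ to length $3m-2$. The pigeonhole compression is sound as far as it goes (for $n\ge 3m$ some translate contains $\lceil (n+1)/m\rceil\ge 4$ partial products, the extreme two are at index-distance $\ge 3$, and the rewrite shortens the word), but at $n=3m-1$ the tight configuration you describe really can occur combinatorially, and the final paragraph of your proposal is a plan, not an argument. The paper closes precisely this gap by arguing globally rather than compressing a fixed word: it considers the minimal $P$-word-length function $l\colon G\to\omega\cup\{\infty\}$, observes that the set of finite values of $l$ is an initial interval of the positive integers (the prefix of a minimal word of length $k$ has length exactly $k-1$), that on each translate $g_iP$ the finite values of $l$ lie in a window $\{k,k+1,k+2\}$ (from $x,y\in g_iP$ with $l(x)=k$ one gets $l(g_i)\le k+1$ and $l(y)\le k+2$), and --- this is the point your plan gestures at but does not execute --- that the translate $g_0P=P$ itself contributes only the single value $1$. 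Hence the maximal finite value is at most $1+3(m-1)=3m-2$. If you want to salvage your local compression, you must genuinely use that $x_0=e$ and $x_n=x$ both have known positions; merely observing that the first block's representative lies in $P$ does not by itself produce the missing unit of savings.

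Two further problems. First, the parity claim in your first step is false: for $G=\Z/2$ and $P=\{1\}$, $P$ is $3$-generic (it is $2$-generic, hence a fortiori $3$-generic under the paper's definition ``at most $n$ translates cover''), $m=3$ is odd, yet $\langle P^2\rangle=\{0\}\ne\langle P\rangle=G$. This example also shows that the identity/parity issue is not a formality: $P^{3\cdot 3-2}=P^{7}=\{1\}$ is not a subgroup, so the lemma as literally stated requires either $e\in P$ or reading $P^k$ as products of length \emph{at most} $k$ (in the paper's application $P$ is thick, hence contains $e$, and then $P\subseteq P^2\subseteq\cdots$ and all padding and parity worries disappear; the paper's own proof silently relies on this). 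Second, even granting closure, your reduction may overshoot below $3m-2$, so you would still need to pad back up to length exactly $3m-2$, which again needs $e\in P$ or an even-length hypothesis. I recommend either adding the hypothesis $e\in P$ (harmless for the intended application) or redefining $P^k$ cumulatively, and then replacing the stalled compression by the word-length-function argument.
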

\begin{proof} Let $G = \bigcup_{i<m} g_i\cdot P$ and consider the following map $l:G \rightarrow \omega\cup \{\infty\}$: \[ l(g) = \min\{k \in \N : \exists p_0,\ldots,p_{k-1} \in P,\ g=p_0 \cdot \ldots \cdot p_{k-1}\}.\] Note that $l[G]$ is a proper initial interval of natural numbers, possibly enlarged by $\infty$. $l$ has value $1$ on $P$. We may assume that $g_0 = e$. On each set $g_i\cdot P$ the map $l$ has values in $\{k, k+1, k+2,\infty\}$ for some natural $k$, because if $x,y\in g_i\cdot P$, $l(x) = k<\infty$, then $l(g_i)\leq k+1$ and $l(y)\leq k+2$. If $M = \max\{l(g)<\infty : g\in G\}$, then clearly $P^M$ is a group with finite index. Since $P$ is $m$-generic, $M\leq 3(m-1)+1 = 3m-2$.
\end{proof}

\section{An interlude on fields and rings}

In this section we consider model theoretic connected components of additive and multiplicative groups of some special kind of rings (including fields). We also investigate the interplay between the notion of thickness in these groups. Some of our results were inspired by \cite{berg}. 

If $R$ is a ring and $P\subseteq R$, then by $P^{\times} = P \cap R^{\times}$ we denote the set of invertible elements from $P$.

\begin{definition}{\cite[Definition 1.2]{berg}} Let $R$ be an infinite ring. 
\begin{itemize}
\item A ring $R$ is in the class $\U_0$ if there is an infinite subset $S\subset R$, having invertible differences, i.e. for every $s_1 \neq s_2 \in S$, $s_1 - s_2 \in R^{\times}$.
\end{itemize}
The additive group $(R,+)$ is abelian, so it is amenable (see e.g. \cite{wag}), i.e. there is a finitely additive, invariant probability measure $\mu$ on the family of all subsets of $R$.
\begin{itemize}
\item A ring $R$ is in the class $\U$ if $R$ is in the class $\U_0$ and for some finitely additive, invariant probability measure $\mu$ on all subsets of $(R,+)$, $\mu(R^{\times})>0$.
\end{itemize}
\end{definition}

Every infinite field (division ring) is in $\U$. By \cite[Propositions 2.2, 2.5]{berg}, every finite-dimensional algebra over an infinite field (e.g. matrix ring ${\MM}_n(K)$) or more generally, every semilocal ring with no finite nonzero homomorphic images is in $\U$ ($R$ is semilocal if $R/\rad(R)$ is semisimple artinian).

\begin{proposition} \label{prop:add}
Let $R$ be a ring from the class $\U_0$. If $P\subseteq R$ is a thick subset in the sense of the additive group $(R,+)$, then for some $p_0,\ldots,p_{k-1}\in P^{\times}$ \[R = \bigcup_{i<k} \tfrac{1}{p_i} \cdot P.\] In particular $P^{\times}$ is left generic in $R^{\times}$ and $R = \left(P^{\times}\right)^{-1} \cdot P$. Moreover \[R^* = \left(\left({\left(R^*,+\right)^{\infty}_{A}}\right)^{\times}\right)^{-1}\cdot (R^*,+)^{\infty}_{A}, \tag{$\triangle$}\] where $R^*$ is a monster model of an arbitrary first order expansion of $(R,+,\cdot,0,1)$ and $A\subset R^*$ is a small set of parameters.
\end{proposition}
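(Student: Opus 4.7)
The plan is to prove the three assertions in order, letting $n$ denote the additive thickness of $P$ and $S\subseteq R$ the infinite subset with invertible pairwise differences provided by $R\in \U_0$. The key step is to extract $n$ elements $s_0,\ldots,s_{n-1}\in S$ whose every difference $s_j-s_i$ ($i<j$) lies in $P^\times$, and then use them as multipliers.

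For the extraction I $2$-color the unordered pairs from $S$ by whether the difference lies in $P$ (well-defined since $P=-P$). An $n$-element monochromatic subset in the ``no'' color would contradict $n$-thickness of $P$, so Ramsey's theorem supplies an $n$-element monochromatic set in the ``yes'' color; its pairwise differences lie in $P$ and in $R^\times$ (by $\U_0$), hence in $P^\times$. With the $s_i$ fixed, for any $r\in R$ the $n$-thickness of $P$ applied to $s_0 r,\ldots,s_{n-1}r$ produces $i<j$ with $(s_j-s_i)r\in P$, so $r\in p_{i,j}^{-1}P$ where $p_{i,j}:=s_j-s_i\in P^\times$. This yields $R=\bigcup_{i<j<n}p_{i,j}^{-1}P$. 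Intersecting with $R^\times$ and using $p_{i,j}\in R^\times$ gives $R^\times=\bigcup_{i<j<n}p_{i,j}^{-1}P^\times$, the left-genericity of $P^\times$ in $R^\times$, and $R=(P^\times)^{-1}\cdot P$ is then immediate.

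For the identity $(\triangle)$, set $H:=(R^*,+)^{\infty}_A$. By Lemmas~\ref{lem:def}$(2)$ and~\ref{lem:thicks}, $H=\langle X_{\Theta_A}\rangle$ and $X_{\Theta_A}=\bigcap\{P:P\subseteq R^*\text{ is $A$-definable and additively thick}\}$. Given $r\in R^*$ I consider the partial type
\[\Sigma(x,y)=\{x\in P,\ xr\in P:P\subseteq R^*\text{ is $A$-definable and additively thick}\}\cup\{xy=1\}.\]
Finite satisfiability is reduced, via the observation that the intersection of finitely many $A$-definable symmetric thick sets is again $A$-definable and thick (another finite Ramsey coloring argument on pairs), to a single application of the first part of the proposition inside $R^*$ (which belongs to $\U_0$ by elementarity together with compactness): for the combined thick set $T$, the cover $R^*=\bigcup q_i^{-1}T$ with $q_i\in T^\times$ produces some $q_i$ with $q_i r\in T$, and then $(x,y)=(q_i,q_i^{-1})$ realises the chosen finite fragment. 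By $\overline\kappa$-saturation, $\Sigma$ has a realisation $(x,y)$ in $R^*$; this gives $x\in X_{\Theta_A}^\times\subseteq H^\times$ and $xr\in X_{\Theta_A}\subseteq H$, so $r=x^{-1}(xr)\in (H^\times)^{-1}\cdot H$. The reverse inclusion is trivial.

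The main obstacle is the Ramsey step: additive thickness of $P$ ensures only \emph{some} pair of differences from an $n$-tuple in $S$ lies in $P$, whereas covering $R$ by left multiplicative translates of $P$ by elements of $P^\times$ requires all $\binom{n}{2}$ of the $s_j-s_i$ to lie in $P^\times$ simultaneously. Ramsey's theorem is exactly what bridges this gap; without it one obtains only the weaker cover $R=\bigcup q^{-1}P$ with $q\in R^\times$. The compactness / saturation step in $(\triangle)$ is routine once the first assertion is available.
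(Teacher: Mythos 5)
Your proof is correct and takes essentially the same route as the paper's: where the paper passes to a monster model and extracts an indiscernible sequence with invertible differences (so that Lemma \ref{lem:thick}$(3)$ puts all pairwise differences into $P$), you apply Ramsey's theorem directly to $S$ inside $R$, and the key translation trick --- applying $n$-thickness to $(s_i\cdot r)_{i<n}$ --- together with the compactness/saturation step for $(\triangle)$ is identical in substance. No gaps.
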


As a consequence we have the following: if $H$ is a proper subgroup of the multiplicative group $(R^{\times},\cdot)$ with finite index, then $H$ is thick in the sense of $R^{\times}$, but not thick in the sense of the additive group $(R,+)$.

\begin{proof} Let $P$ be $n$-thick. Enlarge the ring structure on $R$ by a predicate for $P$ and take a monster model $R^*$. Let $P^*$ correspond to $P$ in $R^*$. By the definition of $\U_0$, there is an infinite indiscernible sequence $(a_i)_{i<\omega}$ in $R^*$ with invertible differences, i.e \[\text{for } i<j<\omega,\ \ a_i - a_j \in {R^*}^{\times}.\] We show that \[R^* = \bigcup_{i<j<n} \frac{1}{a_i - a_j} \cdot P^*.\] Note that by Lemma \ref{lem:thick}$(3)$, $a_i - a_j \in P^*$. Take an arbitrary $x \in R^*$. Since $P^*$ is $n$-thick, applying the definition of thickness to $(a_i\cdot x)_{i<n}$ we have that \[a_ix - a_jx = (a_i - a_j)x \in P^*\] for some $i<j<n$, so $x \in \frac{1}{a_i - a_j}P^*$.

To show the last part, let $\A = \{P\subseteq (R^*,+) : P\text{ is $A$-definable and thick}\}$. Note that by the above and compactness $R^* = \bigcap_{P\in\A}(P^{\times})^{-1}\cdot P = \bigcap_{P\in\A}(P^{\times})^{-1}\cdot\bigcap_{P\in\A}P =$ $((X_{\Theta/A})^{\times})^{-1}\cdot X_{\Theta/A} \subseteq \left(\left({\left(R^*,+\right)^{\infty}_{A}}\right)^{\times}\right)^{-1}\cdot (R^*,+)^{\infty}_{A}$.
\end{proof}

As a corollary to the previous proposition we show that in finite fields thickness of all proper multiplicative subgroups in the sense of the additive group is growing uniformly in the power of the field.

\begin{corollary}
For an arbitrary natural number $N$, for almost all finite fields $\F$ the following holds: every proper subgroup $H < (\F^{\times},\cdot)$ is thick multiplicatively, but not $N$-thick additively.
\end{corollary}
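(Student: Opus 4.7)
The plan is to observe that both halves of the statement hold for every finite field $\F$ with $|\F|\geq N$, which accounts for all but finitely many such fields. The two halves are essentially independent.

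The multiplicative half is just pigeonhole. If $H<\F^{\times}$ is a proper subgroup then its index $k=[\F^{\times}:H]$ is finite (as $\F^{\times}$ is itself finite) and at least $2$. Among any $k+1$ elements of $\F^{\times}$, two must lie in the same right coset of $H$, so $H$ is $(k+1)$-thick in $(\F^{\times},\cdot)$, hence thick multiplicatively.

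For the additive half I would argue by contradiction, specialising Proposition~\ref{prop:add} to the finite setting. If $-1\notin H$, then $-H\neq H$, so $H$ fails to be additively thick already by the symmetry requirement in Definition~\ref{def:thick_set}. Assume therefore that $-1\in H$, that $|\F|\geq N$, and suppose towards a contradiction that $H$ is $N$-thick in $(\F,+)$. Choose $N$ distinct elements $a_0,\ldots,a_{N-1}\in\F$; in a field their pairwise differences $a_i-a_j$ are nonzero and hence invertible. For an arbitrary $x\in\F$, applying the $N$-thickness of $H$ to the sequence $(a_i\cdot x)_{i<N}$ produces indices $i<j$ with $(a_i-a_j)\cdot x\in H$, i.e.\ $x\in\frac{1}{a_i-a_j}H$. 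Therefore
\[
\F \;=\; \bigcup_{i<j<N}\frac{1}{a_i-a_j}\,H \;\subseteq\; \F^{\times},
\]
contradicting $0\in\F\setminus\F^{\times}$.

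There is no real obstacle here: the argument is a clean finitary transcription of the covering produced in the proof of Proposition~\ref{prop:add}, with the indiscernible sequence of elements with invertible differences supplied by the class $\U_0$ replaced by any $N$ distinct elements of the finite field $\F$.
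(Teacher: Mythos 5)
Your proof is correct, but it takes a genuinely different route from the paper's. The paper argues by contradiction via an ultraproduct: if infinitely many finite fields $\F_n$ had $N$-thick proper subgroups $H_n<\F_n^{\times}$, then $(\F,H)=\prod(\F_n,H_n)/\U$ would be an infinite field (hence in $\U_0$) with a proper multiplicative subgroup that is additively $N$-thick, contradicting Proposition \ref{prop:add}; the multiplicative half is left implicit (finite-index subgroups are thick, as noted after Definition \ref{def:thick_set}). You instead finitize the covering argument from the proof of Proposition \ref{prop:add} inside each finite field, replacing the indiscernible sequence with invertible differences by any $N$ distinct elements. What your approach buys: it is elementary and self-contained (no ultraproducts, no transfer of first-order properties), and it yields an explicit effective threshold --- every finite field with $|\F|\geq N$ works, not merely cofinitely many. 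What the paper's approach buys: given Proposition \ref{prop:add} it is a two-line reduction, and it is the template for deriving uniform finite statements from a single infinite one. Two small remarks: your additive half never uses properness of $H$, only $0\notin H$; and in fact, since the definition of $n$-thickness allows constant sequences, any additively thick set must contain $0$, so $H$ itself (as opposed to, say, $H\cup\{0\}$) fails additive thickness for an even more trivial reason --- the substance of the corollary, which both your covering argument and the paper's capture, is the uniform growth of the additive thickness threshold with $|\F|$. Neither of these affects correctness.
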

\begin{proof}
Assume that for some $N$ there are infinitely many finite fields $\{\F_n\}_{n<\omega}$ with $N$-thick proper subgroups $H_n<\F^{\times}_n$. Take an ultraproduct $(\F,H) = \prod (\F_n,H_n)/\U$. Then $\F$ is the infinite field and $H<\F^{\times}$ is a proper subgroup $N$-thick in the sense of $(\F,+)$, contradicting the previous proposition.
\end{proof}

In the next proposition we prove analogous to $(\triangle)$ result about multiplicative group of rings from the class $\U$.

\begin{proposition} \label{prop:mult}
Let $R$ be a ring from the class $\U$. If $P\subseteq R^{\times}$ is a right generic subset in the sense of $(R^{\times},\cdot)$, then \[P^{-1}\cdot (P - P) = R.\] Moreover \[R^* = ({R^*}^{\times},\cdot)^{\infty}_{A} - ({R^*}^{\times},\cdot)^{\infty}_{A},\] where $R^*$ is a monster model of an arbitrary first order expansion of $R$ and $A\subset R^*$ is a small set of parameters.
\end{proposition}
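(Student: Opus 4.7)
The plan is to prove $R = P^{-1}(P-P)$ by working in a monster model $R^*$ with the language expanded by a predicate for $P$; since this identity is first-order, it will follow from the corresponding statement in $R^*$, and the ``moreover'' clause will then be obtained by saturation.

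Two consequences of $R \in \U$ drive the main argument. Since $R \in \U_0$, there is an indiscernible sequence $(a_i)_{i<\omega}$ in $R^*$ with pairwise invertible differences; applying the right $n$-genericity of $P^*$ in $({R^*}^\times,\cdot)$ to each difference $a_i - a_j \in \bigcup_{k<n}P^*g_k$, together with pigeonhole and indiscernibility, yields a single $k_0<n$ with $(a_i - a_j)g_{k_0}^{-1} \in P^*$ for all $i<j$. Letting $b_i := a_i g_{k_0}^{-1}$ produces an indiscernible sequence in $R^*$ (over $A\cup\{g_{k_0}\}$) satisfying $b_i - b_j \in P^*$ whenever $i<j$. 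On the additive side, from $\mu(R^\times)>0$ and $R^\times = \bigcup_{k<n}Pg_k$ some $\mu(Pg_{i_0})>0$; transporting by $\mu'(X) := \mu(Xg_{i_0})$ yields an additive-invariant probability measure with $c := \mu'(P)>0$. A disjoint-translates pigeonhole (if $P+r_1,\dots,P+r_m$ were pairwise disjoint then $mc \leq 1$) shows that $P - P$ is additively $m$-thick in $R$ for $m := \lfloor 1/c\rfloor + 1$, and by elementarity $P^* - P^*$ is additively $m$-thick in $R^*$.

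Combining these ingredients: for arbitrary $x \in R^*$, apply the additive $m$-thickness of $P^* - P^*$ to the $m$-tuple $(b_0 x, b_1 x, \dots, b_{m-1} x)$. Some $i<j<m$ yields $(b_i-b_j)x = b_i x - b_j x \in P^* - P^*$, and since $p := b_i - b_j \in P^*$ we obtain $x \in p^{-1}(P^* - P^*) \subseteq {P^*}^{-1}(P^* - P^*)$. This establishes $R^* = {P^*}^{-1}(P^* - P^*)$, and $R = P^{-1}(P-P)$ follows by elementary transfer.

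For the ``moreover'', fix $r \in R^*$ and consider the $A\cup\{r\}$-definable set $\Phi_r := \{(p,q_1,q_2) \in ({R^*}^\times)^3 : pr = q_1 - q_2\}$. Every $A$-definable multiplicatively thick $P \subseteq {R^*}^\times$ is right generic by Lemma \ref{lem:thick}(4), and the main identity (a first-order schema in the defining parameters, hence valid for all such $P$ by elementarity) yields a triple in $\Phi_r$ with all three coordinates in $P$. Since finite intersections of $A$-definable multiplicatively thick sets remain $A$-definable and multiplicatively thick (a short Ramsey argument), the partial type asserting $\Phi_r$ and membership in every such $P$ is finitely consistent, and saturation realises it. The resulting triple $(p,q_1,q_2)$ satisfies $p,q_1,q_2 \in \bigcap_P P = X_{\Theta_A}$ by Lemma \ref{lem:thicks}, hence $p,q_1,q_2 \in G^\infty_A = \langle X_{\Theta_A}\rangle$; the group property gives $p^{-1}q_1, p^{-1}q_2 \in G^\infty_A$, so $r = p^{-1}q_1 - p^{-1}q_2 \in G^\infty_A - G^\infty_A$. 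The main conceptual obstacle is engineering the fusion of the multiplicative and additive structures: the indiscernible sequence's differences $b_i - b_j$ must simultaneously belong to $P^*$ (to serve as the left multiplier $p$) and, via the additive thickness of $P^* - P^*$ extracted from $\mu$, be representable (after multiplication by an arbitrary $x$) as a difference of two $P^*$-elements.
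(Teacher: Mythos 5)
Your proof is correct and follows essentially the same route as the paper's: extract via Ramsey a sequence with invertible differences all lying in one right translate of $P$, use positivity of the measure on some translate of $P$ to force two additive translates to meet, and combine; the ``moreover'' clause is likewise obtained by the same compactness argument identifying $\bigcap P$ with $X_{\Theta_A}$. The only differences are cosmetic (you normalise away the translating elements $f_p, f_q$ and package the measure step as additive thickness of $P-P$).
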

\begin{proof}
The idea of the proof is based on arguments from \cite[Theorem 1.3]{berg}. Let $f_0,\ldots,f_{k-1}\in R^{\times}$ satisfy \[R^{\times} = \bigcup_{i<k} Pf_i \tag{1}.\] We claim that there is an infinite sequence $(a_i)_{i<\omega}\subseteq R$ and $p<k$ such that \[\text{for every } i<j<\omega,\ \  a_i-a_j\in Pf_p \tag{2}.\] Let $S\subseteq R$ be (by the definition of $\U$) an infinite subset of $R$ with invertible differences. Clearly, by Ramsey Theorem and $(1)$, there is $p<k$ and $(a_i)_{i<\omega}$, satisfying $(2)$.

Let $\mu$ be a measure from the definition of $\U$. By $(1)$, there is $q<k$ such that $\mu(Pf_q)>0$. For any $x\in R$, there exist $i<j<\omega$ satisfying \[(a_ix+Pf_q)\cap(a_jx+Pf_q)\neq\emptyset,\] because otherwise $\{a_ix+Pf_q\}_{i<\omega}$ forms a family of disjoint set of the same positive measure. Then $(a_i-a_j)x\in (P-P)f_q$, so by $(2)$, $f_px\frac{1}{f_q}\in f_p\cdot \frac{1}{a_i-a_j}(P-P) \subseteq P^{-1}(P-P)$. Hence $R = f_pR\frac{1}{f_q} = P^{-1}(P-P)$.

To show the last part, note that by compactness and the first part we have $R^* =$ $X_{\Theta/A}^2 - X_{\Theta/A}^2$ $= ({R^*}^{\times},\cdot)^{\infty}_{A} - ({R^*}^{\times},\cdot)^{\infty}_{A}$.
\end{proof}

Let $K$ be an infinite field. It seems that $(\triangle)$ from Proposition \ref{prop:add} is the most general result that one can prove about $(K,+)^{\infty}_A$ without any assumptions on the structure of $K$. If we additionally assume that $(K,+)^{\infty}$ exists (e.g. if $K$ has NIP then by Theorem \ref{thm:sh}, $(K,+)^{\infty}$ exists, e.g. reals $(\R,+,\cdot,0,1)$ has NIP), then \[(K,+)^{\infty} = K,\] because $(K,+)^{\infty}$ is a nontrivial ideal of $K$: for every $a\in K$, $a\cdot(K,+)^{\infty}$ is an $a$-invariant subgroup of $(K,+)$, isomorphic to $(K,+)^{\infty}$, thus with bounded index, so $(K,+)^{\infty} = x\cdot(K,+)^{\infty}$ (implication: existence of $(K,+)^{00}$ implies $(K,+)^{00} = K$, was also noted by Pillay in \cite[Proposition 4.1$(i)$]{ekp}). These considerations can be generalized to rings from the class $\U$.

\begin{proposition}
Let $R$ be a ring from the class $\U$. If $(R^*,+)^{\infty}$ exists (where $R^*$ is a monster model of some first order expansion of $R$), then \[(R^*,+)^{\infty} = R^*.\] Similarly for $(R^*,+)^{00}$ and $(R^*,+)^{0}$, e.g. if $(R^*,+)^{00}$ exists, then $(R^*,+)^{00} = R^*$.
\end{proposition}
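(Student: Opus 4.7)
The plan is to extend to rings in $\U$ the field-case argument sketched in the paragraph immediately preceding the proposition, the main extra tool being the identity $(\triangle)$ of Proposition \ref{prop:add}. Fix $?\in\{\infty,00,0\}$ and assume that $(R^*,+)^{?}$ exists. The proof will proceed in two steps.

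The first step is to show that $a\cdot(R^*,+)^{?}=(R^*,+)^{?}$ for every $a\in(R^*)^{\times}$. Indeed, left translation $x\mapsto a\cdot x$ is a bijection of $(R^*,+)$ defined with parameter $a$, so it carries each member of the intersection defining $(R^*,+)^{?}_{\emptyset}$ to an $\{a\}$-definable (resp.\ $\bigwedge$-$\{a\}$-definable, $\{a\}$-invariant) subgroup of the same finite or bounded index. Hence $a\cdot(R^*,+)^{?}_{\emptyset}\supseteq(R^*,+)^{?}_{\{a\}}$ by minimality of the latter in the relevant class. The existence hypothesis then forces $(R^*,+)^{?}_{\{a\}}=(R^*,+)^{?}_{\emptyset}=(R^*,+)^{?}$, giving $(R^*,+)^{?}\subseteq a\cdot(R^*,+)^{?}$; applying the same inclusion to $a^{-1}$ in place of $a$ and left-multiplying by $a$ supplies the reverse containment.

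The second step combines this closure property with Proposition \ref{prop:add} applied at $A=\emptyset$, which gives
\[
R^*\;=\;\Bigl(\bigl((R^*,+)^{\infty}_{\emptyset}\bigr)^{\times}\Bigr)^{-1}\cdot(R^*,+)^{\infty}_{\emptyset}.
\]
By Lemma \ref{lem:def}(3) one has $(R^*,+)^{\infty}_{\emptyset}\subseteq(R^*,+)^{?}$, so every element of $R^*$ has the form $b^{-1}\cdot c$ with $b\in(R^*)^{\times}$ and $c\in(R^*,+)^{?}$; the first step then yields $b^{-1}\cdot c\in b^{-1}\cdot(R^*,+)^{?}=(R^*,+)^{?}$. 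Hence $R^*\subseteq(R^*,+)^{?}$, and the reverse inclusion is trivial, which proves the equality in all three cases.

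I expect the only subtle point to be the parameter bookkeeping in the first step: forming $a\cdot H$ a priori introduces $a$ as a new parameter, and it is precisely the hypothesis that $(R^*,+)^{?}$ exists that allows this extra parameter to be reabsorbed into $\emptyset$. Without it one would only obtain $(R^*,+)^{?}_{\{a\}}\subseteq a\cdot(R^*,+)^{?}_{\emptyset}$, which is too weak to feed into $(\triangle)$ in the second step.
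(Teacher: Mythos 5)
Your proof is correct, and it takes a genuinely (if mildly) different route to the paper's. The first step --- that $(R^*,+)^{?}$ is stable under multiplication by units, with the existence hypothesis used exactly to reabsorb the new parameter $a$ --- is the same in both arguments, and your parameter bookkeeping is right. The difference is in how each argument then covers all of $R^*$: the paper shows that every additively thick set contains a unit (using the set with invertible differences), deduces by compactness that $(R^*,+)^{?}$ meets ${R^*}^{\times}$, hence contains all of ${R^*}^{\times}$ by the first step, and finally invokes Proposition \ref{prop:mult} (whose proof uses the invariant measure from the definition of $\U$) to write every element of $R^*$ as an additive combination of units. You instead plug the unit-invariance directly into the additive decomposition $(\triangle)$ of Proposition \ref{prop:add}, writing each $x\in R^*$ as $b^{-1}c$ with $b$ a unit of $(R^*,+)^{\infty}_{\emptyset}$ and $c\in(R^*,+)^{\infty}_{\emptyset}\subseteq(R^*,+)^{?}$. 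This is a bit more economical and, notably, only uses the hypothesis $R\in\U_0$ rather than the full $R\in\U$, so your argument actually proves the proposition for the larger class $\U_0$; the paper's route, by contrast, passes through the intermediate fact ${R^*}^{\times}\subseteq(R^*,+)^{?}$, which is of some independent interest but costs the measure-theoretic input of Proposition \ref{prop:mult}.
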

\begin{proof} Let $G$ be $(R^*,+)^{\infty}$, $(R^*,+)^{00}$ or $(R^*,+)^{0}$ (whenever exist). For every $r\in {R^*}^{\times}$, $f(x)=r\cdot x$ is an automorphism of $(R^*,+)$, so $r\cdot G = G$. By the definition of the class $\U$, in $R^*$ there is an infinite set with invertible differences. Therefore, each thick subset of $(R^*,+)$ contains some invertible element. Thus, by compactness $G \cap {R^*}^{\times} \neq \emptyset$ ($(R^*,+)^{00}$ and $(R^*,+)^{0}$ are intersections of definable thick subsets). Therefore ${R^*}^{\times} \subseteq G$. By Proposition \ref{prop:mult}, $G = R^*$.
\end{proof}

In a forthcoming paper \cite{gis2} we prove, that every group $G$ with the following property (called absolutely connectedness) is perfect: for every first order expansion $(G,\cdot,\ldots)$ of $G$, working in a saturated extension, $G^{\infty}$ exists and $G^{\infty}=G$. Therefore, for each ring $R$ from $\U$, there is a first order structure on it, such that $(R^*,+)^{\infty}_A \neq R^*$, but $\left(\left({\left(R^*,+\right)^{\infty}_{A}}\right)^{\times}\right)^{-1}\cdot (R^*,+)^{\infty}_{A} = R^*$.

\section{Canonical connected components and groups with NIP}

In this section we assume that $(G,\cdot,\ldots)$ is a sufficiently saturated model, i.e. $\overline{\kappa}$-saturated and $\overline{\kappa}$-strongly homogeneous for some large cardinal $\overline{\kappa}$.

We analyse connected components of dependent groups i.e. with theories without the independence property, (NIP). All stable and $o$-minimal theories have NIP. Also the theory of algebraically closed valued fields (ACVF) has NIP. Simple unstable theories have the independence property (see \cite{sh}).

Recall (Definition \ref{def:kp}) that if for every small set of parameters $A\subset G$, $G^{\infty}_A = G^{\infty}_{\emptyset}$, then we say that $G^{\infty}$ exists and define it as $G^{\infty}_{\emptyset}$ (similarly for $G^{00}$ and $G^{0}$). The existence of $G^0, G^{00}$ and $G^{\infty}$ is the property of the theory $\Th(G,\cdot,\ldots)$ and does not depend on particular choice of the monster model $G$. In the case of $G^0$ and $G^{00}$ it is folklore. In the case of $G^{\infty}$ it follows from Remark \ref{rem:ccc}$(3)$ (it is also pointed in \cite{newelski3}).

The next fact is known (see e.g. \cite{peter}), but for completeness of the exposition we include a proof.

\begin{remark} \label{rem:folk}
\begin{enumerate}
\item[(1)] If $G^{\infty}$ exists, then $G^{00}$ exists.
\item[(2)] If $G^{00}$ exists, then $G^{0}$ exists.
\end{enumerate}
\end{remark}
\begin{proof} Let $A$ be a small subset of $G$.

$(1)$  Consider $G^{00}_A$. Since $G^{\infty} \subseteq G^{00}_A$, $G^{00}_A$ is the union of cosets of $G^{\infty}$. But there are boundedly many ($<2^{|L(A)|}$) cosets of $G^{\infty}$ in $G$. Therefore the group $D = \bigcap_{f\in\aut(G)}G^{00}_{f[A]}$ has bounded index. $D$ is also $\emptyset$-invariant and $\bigwedge$-definable over $A$, so $D$ is $\bigwedge$-definable over $\emptyset$. Therefore $D = G^{00}_{\emptyset}$.

$(2)$ Consider $D = \bigcap_{f\in\aut(G)}G^{0}_{f[A]}$. Similarly as in (i), $D$ has bounded index and is $\bigwedge$-definable over $\emptyset$. Therefore \[ D = \bigcap_{i<|L(\emptyset)|}P_i,\] where $P_i \subseteq G$ is $\emptyset$-definable and thick subset of $G$ (otherwise $[G:D]$ is unbounded). If $H < G$ is $f[A]$-definable subgroup of $G$ of finite index (where $f\in\aut(G)$), then there is $i<|L(\emptyset)|$ such that $P_i \subseteq H$ and $P_i$ is $n$-thick (for some natural $n$). By Lemmas \ref{lem:thick}$(4)$ and \ref{lem:bound}, $P_i^{3n+1}$ is $\emptyset$-definable subgroup of $H$ of finite index. Hence $G^{0}_{\emptyset} \subseteq D$.
\end{proof}

\begin{definition}[\cite{sh}]
An $L$-theory $T$ is said to have the NIP (for ``not the independence property'') if there is no formula $\varphi(\overline{x},\overline{y}) \in L$ and elements $\{\overline{a}_i, \overline{b}_{w} : i<\omega, \text{ finite }w\subset \omega\}$ from the monster model $\C\models T$ such that \[\C \models \varphi(\overline{a}_i,\overline{b}_{w})\  \Longleftrightarrow\  i \in w,\] i.e. a family of $\varphi$-definable sets $\{\varphi(\overline{a}_i,\C) : i<\omega\}$ is independent in the sense that every boolean combination of elements from the family is nonempty.
\end{definition}

In \cite{sh1,sh2} Shelah proved that for such groups $G^{00}$ exists (even in more wider context of $\bigwedge$-definable groups) and if additionally $G$ is abelian, then $G^{\infty}$ exists. In Theorem \ref{thm:sh} below we strengthen this result showing existence of $G^{\infty}$ without the commutativity assumption.

\begin{theorem} \label{thm:sh}
If a group $G$ has the theory with NIP, then $G^{\infty}$ exists.
\end{theorem}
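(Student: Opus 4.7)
The strategy is to reduce the theorem to a combinatorial statement about thick subsets of $G$ and then control it using the NIP hypothesis. By Remark \ref{rem:ccc}(3) we may assume both $L$ and the parameter set $A$ are countable, since the general statement follows from the countable case. The inclusion $G^{\infty}_A\subseteq G^{\infty}_{\emptyset}$ is automatic: $G^{\infty}_{\emptyset}$ is $\emptyset$-invariant, hence $A$-invariant, and of bounded index by Lemma \ref{lem:def}(1), so it appears in the intersection defining $G^{\infty}_A$. The real content of the theorem is the reverse inclusion $G^{\infty}_{\emptyset}\subseteq G^{\infty}_A$. By Lemma \ref{lem:def}(2), $G^{\infty}_A$ is a subgroup, so it suffices to show $X_{\Theta_{\emptyset}}\subseteq G^{\infty}_A$.

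Fix $g=a^{-1}b\in X_{\Theta_{\emptyset}}$ (so $\Theta_{\emptyset}(a,b)$) and argue by contradiction: assume $g\notin G^{\infty}_A=\bigcup_{n}X_{\Theta_A}^{n}$. By Lemma \ref{lem:thicks}, for each $n$ there is an $A$-definable thick subset $Q_n\subseteq G$ with $g\notin Q_n^{n}$. The task is to extract from this data a single $\emptyset$-definable thick set $P$ with $g\notin P$, which contradicts $\Theta_{\emptyset}(a,b)$. Since $L$ and $A$ are countable, there are only countably many formulas $\psi(x,\bar y)\in L$ with $\bar y$-parameters in $A$; by pigeonhole I may assume that infinitely many of the $Q_n$ are instances $\psi(G,\bar b_n)$ of a single formula $\psi$. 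A standard Ramsey plus compactness argument then extracts from $(\bar b_n)_n$ an $\emptyset$-indiscernible sequence $(\bar c_i)_{i<\omega}$ along which $\psi(x,\bar c_i)$ is thick of uniform thickness and along which the non-containment of $g$ in the relevant powers is preserved.

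The NIP hypothesis enters through the finite alternation bound for $\psi$: along an $\emptyset$-indiscernible sequence only boundedly many boolean combinations of the instances $\psi(x,\bar c_i)$ are realised. Combined with Remark \ref{rem:e}(1) -- the ``crucial'' observation that $G$-conjugates of $X_{\Theta_A}$ lie in $X_{\Theta_A}^2$, so that $\aut(G)$-conjugation is absorbed into $G^{\infty}_A$ at the cost of doubling powers -- this should let me assemble a finite boolean combination of the $\psi(x,\bar c_i)$ which is $\aut(G)$-invariant (hence $\emptyset$-definable by compactness and the uniformity of the thickness bound), is still thick, and avoids $g$. The principal obstacle is precisely this last step: producing a \emph{thick} $\emptyset$-definable witness out of the sequence of $A$-definable ones while retaining $g\notin P$. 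Thickness is fragile under intersections, so the NIP alternation bound has to be applied in conjunction with Remark \ref{rem:e}(1) to trade conjugation for controlled doubling and to stay inside the subgroup $G^{\infty}_A$; this is the single nontrivial combinatorial step on which the whole argument rests.
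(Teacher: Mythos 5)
Your reduction is sound as far as it goes: passing to countable $L(A)$ via Remark~\ref{rem:ccc}(3), observing that only $G^{\infty}_{\emptyset}\subseteq G^{\infty}_{A}$ requires proof, reducing this to $X_{\Theta_{\emptyset}}\subseteq G^{\infty}_{A}=\bigcup_{n}X_{\Theta_{A}}^{n}$, and using Lemma~\ref{lem:thicks} to turn a failure into $A$-definable thick sets $Q_{n}$ with $g\notin Q_{n}^{n}$. But everything after that is a declaration of intent, and the step you yourself flag as ``the principal obstacle'' is the entire content of the theorem. Two concrete problems. First, there is no reason a finite boolean combination of instances $\psi(x,\bar c_{i})$ over specific parameters $\bar c_{i}$ should be $\aut(G)$-invariant: the NIP alternation bound along an indiscernible sequence restricts which boolean combinations are \emph{consistent}, it does not make any of them invariant, so the route ``invariant and definable, hence $\emptyset$-definable'' never gets started. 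Second, the way NIP actually enters is not through alternation at all. One assumes the relevant uniformity fails, builds by transfinite induction a sequence $(A_{i},c_{i})_{i<\overline{\kappa}}$ with $c_{i}\in\bigcap_{j<i}X_{\underset{A_{j}}{\equiv}}^{m}\setminus X_{\underset{A_{i}}{\equiv}}^{m+4}$, extracts an indiscernible subsequence by Erd\H{o}s--Rado, and then exhibits an independence pattern with the products $c_{I,0}=c_{2i_{1}+1}\cdots c_{2i_{n}+1}$ and $c_{I,1}=c_{2i_{1}}\cdots c_{2i_{n}}$: the element $c_{I,0}c_{I,1}^{-1}$ lies in $X_{E_{\Phi'/\overrightarrow{A}_{2j}}}$ if and only if $j\notin I$. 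Remark~\ref{rem:e}(1) is used exactly there, to move conjugates past each other at the cost of squaring (whence the constant $4$ in place of Shelah's $2$ in the abelian case). Nothing in your sketch produces such a configuration, and the single-element, single-formula framing discards the group-theoretic ingredient (products over finite index sets) that makes the independence pattern appear.

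There is also a structural mismatch: the paper's argument is global, not local. The Claim~($\clubsuit$) provides, for each $m$, a \emph{bounded} family $\{A_{i}\}_{i<\lambda}$ such that $\bigcap_{i<\lambda}X_{\underset{A_{i}}{\equiv}}^{m}\subseteq X_{\underset{A}{\equiv}}^{m+4}$ for \emph{every} small $A$ simultaneously; one then sets $T_{m}=\bigcap_{|A|<\alpha}X_{\underset{A}{\equiv}}^{m}$ and checks that $H=\bigcup_{m}T_{m}$ is an $\emptyset$-invariant subgroup of bounded index sandwiched between $G^{\infty}_{\emptyset}$ and $G^{\infty}_{B}$. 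To complete your proposal you would need either to reprove this uniform claim or to supply a genuinely new local argument; as written, the proposal does neither.
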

\begin{proof}
The proof in \cite{sh2} uses the commutativity assumption only in the proof of Main Claim 1.7. Thus first we prove this claim in the general case (we replace the constant 2 from \cite{sh2} by 4) and then we outline the rest of the proof from \cite{sh2} with small changes.
\begin{claim}
For every natural $m$ and $\alpha < \overline{\kappa}$ there exists $\lambda < \overline{\kappa}$ and a family of subsets $\{A_i : i<\lambda\}$ of $G$, $|A_i|<\alpha$ such that \[\text{for every }A\subset G\text{ with }|A|<\alpha,\ \  \bigcap_{i<\lambda}\left(X_{\underset{A_i}{\equiv}}\right)^m \subseteq \left(X_{\underset{A}{\equiv}}\right)^{m+4}. \tag{$\clubsuit$}\] 
\end{claim}

Recall that $X_{\underset{A}{\equiv}} = \{ a^{-1}b : a,b\in G, \tp(a/A) = \tp(b/A)\}$, $B^G = \bigcup_{g\in G}B^g$ (for $B\subseteq G$) and $G$ is $\overline{\kappa}$-saturated and $\overline{\kappa}$-strongly homogeneous.

\begin{proof}[Proof of the Claim]
Suppose the assertion of the claim is false. Then for some natural $m$ and $\alpha < \overline{\kappa}$, by induction we can find a sequence $(A_i,c_i)_{i<\overline{\kappa}}$, satisfying for every $i<\overline{\kappa}$:
\begin{enumerate}
\item[(1)] $A_i \subset G$, $|A_i|<\alpha$, $c_i\in G$,
\item[(2)] $c_i \in \bigcap_{j<i}X_{\underset{A_j}{\equiv}}^m \setminus X_{\underset{A_i}{\equiv}}^{m+4}$.
\end{enumerate}
We may assume that $\overline{\kappa}$ is large enough to apply Erd\H{o}s-Rado Theorem to get (as in \cite{sh2}) an infinite indiscernible sequence $(A_i,c_i)_{i<\omega}$ satisfying $(1)$ and $(2)$.

Now, for a finite set of formulas $\Phi(x,\overline{y}) = \{\varphi_1(x,\overline{y}), \ldots, \varphi_k(x,\overline{y})\}\subset L(\emptyset)$ and small indexed set $\overrightarrow{A}$, we define an equivalence relation \[E_{\Phi/\overrightarrow{A}}(x,y) = \bigwedge_{1\leq i \leq k} \left(\varphi(x,\overrightarrow{A})\leftrightarrow\varphi(y,\overrightarrow{A})\right).\] It is easy to see that for every natural $r$: \[ \underset{A}{\equiv}\ \   = \bigcap_{\Phi\subset L(\emptyset)} E_{\Phi/\overrightarrow{A}},\ \ \ X_{\underset{A}{\equiv}}^r\ \  = \bigcap_{\Phi\subset L(\emptyset)} X_{E_{\Phi/\overrightarrow{A}}}^r,\ \text{ and }\ X_{\underset{A}{\equiv}}^G = \bigcap_{\Phi\subset L(\emptyset)} X_{E_{\Phi/\overrightarrow{A}}}^G. \tag{\textdagger}\] By Remark \ref{rem:e}$(1)$ we can conclude that every conjugate of $X_{\underset{A}{\equiv}}$ is in $X_{\underset{A}{\equiv}}^2$: \[(X_{\underset{A}{\equiv}})^G \subseteq X_{\underset{A}{\equiv}}^2. \tag{\textdaggerdbl}\]

By indiscernibility of $(A_i,c_i)_{i<\omega}$, $(2)$, (\textdagger), (\textdaggerdbl)\  and compactness we can find two finite sets of formulas $\Phi(x,\overline{y}), \Phi'(x,\overline{y}) \subset L(\emptyset)$, such that for every natural $i$:
\begin{enumerate}
\item[(3)] $c_i \not\in X_{E_{\Phi/\overrightarrow{A}_i}}^{m+4}$,
\item[(4)] $(X_{E_{\Phi'/\overrightarrow{A}_i}})^G \subseteq X_{E_{\Phi/\overrightarrow{A}_i}}^2$.
\end{enumerate}

Define for an arbitrary finite sequence $I = (i_1,\ldots,i_n)$ of pairwise distinct elements of $\omega$, the following elements of $G$:
\begin{itemize}
\item $c_{I,0} = c_{2i_1+1}\cdot\ldots\cdot c_{2i_n+1}$,
\item $c_{I,1} = c_{2i_1}\cdot\ldots\cdot c_{2i_n}$.
\end{itemize}

To obtain a contradiction with NIP it is sufficient to show the following:
\begin{enumerate}
\item[(5)] if $j\not\in I$, then $c_{I,0}c_{I,1}^{-1}\in X_{\underset{A_{2j}}{\equiv}} \subseteq X_{E_{\Phi'/\overrightarrow{A}_{2j}}}$,
\item[(6)] if $j\in I$, then $c_{I,0}c_{I,1}^{-1}\not\in X_{E_{\Phi'/\overrightarrow{A}_{2j}}}$.
\end{enumerate}

If $j\not\in I$, then $c_{I,0}$ and $c_{I,1}$ have the same type over $A_{2j}$, thus $c_{I,0}c_{I,1}^{-1}\in X_{\underset{A_{2j}}{\equiv}}$ and (5) follows.

Assume by way of contradiction that (6) does not hold. Thus for some $j\in I$, we have \[c_{I,0}c_{I,1}^{-1}\in X_{E_{\Phi'/\overrightarrow{A}_{2j}}}.\] Let $I = I_1^\smallfrown \{j\}^\smallfrown I_2$, then 
\begin{equation}
\begin{array}{rl}
c_{I,0}\cdot c_{I,1}^{-1} &= c_{I_1,0}\cdot c_{2j+1}\cdot c_{I_2,0}\cdot c_{I_2,1}^{-1}\cdot c_{2j}^{-1}\cdot c_{I_1,1}^{-1} \\
c_{I,0}\cdot c_{I,1}^{-1}\cdot c_{I_1,1}\cdot c_{2j} &= c_{I_1,0}\cdot c_{2j+1}\cdot c_{I_2,0}\cdot c_{I_2,1}^{-1} \\
c_{2j} &= c_{I_1,1}^{-1}\cdot c_{I,1}\cdot c_{I,0}^{-1}\cdot c_{I_1,0}\cdot c_{2j+1}\cdot c_{I_2,0}\cdot c_{I_2,1}^{-1} \\
&= [c_{I_1,1}^{-1} (c_{I,1}c_{I,0}^{-1}) c_{I_1,1}] \cdot (c_{I_1,1}^{-1}c_{I_1,0})\cdot c_{2j+1}\cdot (c_{I_2,0}c_{I_2,1}^{-1}). \nonumber
\end{array}
\end{equation}

Since $j\not\in I_1\cup I_2$ by (5) we have: $(c_{I_1,1}^{-1}c_{I_1,0}), (c_{I_2,0}c_{I_2,1}^{-1}) \in X_{\underset{A_{2j}}{\equiv}}$. By assumptions $c_{I,0}c_{I,1}^{-1}\in X_{E_{\Phi'/\overrightarrow{A}_{2j}}}$ and $c_{2j+1}\in X_{\underset{A_{2j}}{\equiv}}^m $. Therefore using (4) we obtain \[c_{2j} \in X_{E_{\Phi'/\overrightarrow{A}_{2j}}}^{c_{I_1,1}}\cdot X_{\underset{A_{2j}}{\equiv}}^{m+2} \subseteq  X_{E_{\Phi/\overrightarrow{A}_{2j}}}^{m+4},\] contrary to $(3)$.
\end{proof}

Now we proceed as in \cite{sh2}, with small changes. Let $B\subset G$ be an arbitrary small set of parameters. We will prove that $G^{\infty}_B = G^{\infty}_{\emptyset}$. Let \[B' = B \cup \{\text{representatives of all classes of }E_{\La/B}\}, \tag{$\spadesuit$}\] and $\alpha = |B'|^{+}\leq \left(2^{|L(B)|}\right)^{+}$. By the Claim, for every natural $m$ we have a number $\lambda_m<\overline{\kappa}$ and a family $\{A_i\}_{i<\lambda_m}$ satisfying $(\clubsuit)$. Since the cofinality $\cf(\overline{\kappa}) > \omega$, we may find a universal number $\lambda = \sum_{m<\omega}\lambda_m <\overline{\kappa}$ and a family $\A = \{A_i\}_{i<\lambda}$ satisfying $(\clubsuit)$. We may also assume that $\A$ is closed under finite unions. Define
\[S_m = \bigcap_{i<\lambda} \left(X_{\underset{A_i}{\equiv}}\right)^m,\ \ T_m = \bigcap_{A\subset G, |A|<\alpha} \left(X_{\underset{A}{\equiv}}\right)^m.\]
Then the following hold for every natural $m$
\begin{enumerate}
\item[(1)] $T_m\subseteq S_m\subseteq T_{m+4}$,
\item[(2)] $S_m = (S_1)^m, T_m = (T_1)^m$,
\item[(3)] $T_1 \subseteq X_{\underset{B'}{\equiv}} \subseteq X_{\La/B} = \{a^{-1}b:E_{\La/B}(a,b)\},$ (see Remark \ref{def:la}),
\item[(4)] $X_{\Theta/\bigcup_{i<\lambda}A_i} \subseteq S_1$.
\end{enumerate}
$(1)$ follows from the Claim, $(2)$ follows from compactness and the fact that $\A$ is closed under finite unions, $(3)$ follows from $(\spadesuit)$, $(4)$ is easy.

Let \[H = \bigcup_{m<\omega} S_m = \bigcup_{m<\omega} T_m.\] By $(2)$, $H$ is a group. Since all $T_m$'s are $\emptyset$-invariant, $H$ is $\emptyset$-invariant too. By $(4)$, $H$ contains $G^{\infty}_{\bigcup_{i<\lambda}A_i}$, so has bounded index. Thus $G^{\infty}_{\emptyset} \subseteq H$. By $(3)$, $H\subseteq G^{\infty}_B$. Therefore \[G^{\infty}_{\emptyset} \subseteq H \subseteq G^{\infty}_B \subseteq G^{\infty}_{\emptyset},\] hence $G^{\infty}_B = G^{\infty}_{\emptyset}$.
\end{proof}

The converse of Theorem \ref{thm:sh} is not true. Theorem 5.1 from \cite{jal} says that an existentially closed $CSA_f$-group $(G,\cdot)$ (with the pure group structure) is definably simple (for every nontrivial $a,b\in G$, $a\in \left(b^G\cup {b^{-1}}^G\right)^3$). Thus a sufficiently saturated extension $G^*$ of $G$ is also simple and ${G^*}^{\infty}$ exists and equals to $G^*$. On the other hand by \cite[Corollary 2.2]{jal2} we have that the theory of $G$ has the independence property.

In \cite{NIP2} the authors proved that if the group $G$ has theory with NIP and is definably amenable (there exists a left invariant finitely additive measure on definable sets) then $G^{\infty} = G^{00} = \stab(p_0)$ for some global type $p_0\in S_1(G)$ consisting of formulas with positive measure. Therefore we may ask, in how many steps in this case $X_{\Theta}$ generates $G^{\infty}$? The answer is 2. This particular type $p_0$ satisfies assumptions from the next proposition.

\begin{proposition} \label{prop:ogr}
If there exists a global type $p\in S_1(G)$ such that the orbits $G\cdot p$ and $\aut(G) \cdot p$ are bounded, then $G^{\infty}$ exists and \[G^{\infty} = G^{00} = X_{\Theta}^2 = \stab^G(p).\]
\end{proposition}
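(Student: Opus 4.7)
The plan is to sandwich $\stab^G(p)$ between $G^\infty_A$ and $X_{\Theta_A}^2$ for every small parameter set $A$, and then use Lemma \ref{lem:def}(2) --- which says $G^\infty_A = \langle X_{\Theta_A}\rangle$ --- to collapse the chain into a single equality. This simultaneously gives the four identifications and the existence of $G^\infty$.

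For the lower inclusion $G^\infty_A \subseteq \stab^G(p)$, the difficulty is that $\stab^G(p)$ is typically not $A$-invariant: one only has $\tau(\stab^G(p)) = \stab^G(\tau\cdot p)$. I would therefore pass to its $A$-invariant core
\[N_A := \bigcap_{\tau\in\aut(G/A)} \tau(\stab^G(p)) = \bigcap_{q\in\aut(G/A)\cdot p} \stab^G(q).\]
Since $\aut(G)\cdot p$ is bounded, so is the index set, and for each $q=\tau p$ the orbit $G\cdot q = \tau(G\cdot p)$ has the same bounded cardinality as $G\cdot p$, so $[G:\stab^G(q)]$ is bounded. Hence $N_A$ is an $A$-invariant subgroup of bounded index contained in $\stab^G(p)$, giving $G^\infty_A \subseteq N_A \subseteq \stab^G(p)$.

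The main step is the upper inclusion $\stab^G(p)\subseteq X_{\Theta_A}^2$. Take $g\in\stab^G(p)$, a realisation $b\models p$ in a sufficiently saturated extension $G^{**}$, and a small model $M\supseteq A$ of $G^{**}$. Then $gb\models p$, so $gb\equiv_M b$, and some $\sigma\in\aut(G^{**}/M)$ sends $b$ to $gb$. The key trick is to apply $\sigma$ not to $b$ itself but to $b^{-1}$: since $\sigma$ is a group homomorphism, $\sigma(b^{-1}) = (gb)^{-1} = b^{-1}g^{-1}$, whence $b^{-1}\equiv_M b^{-1}g^{-1}$. Ziegler's lemma then yields $\Theta_A^2(b^{-1},b^{-1}g^{-1})$, i.e.\ some $c\in G^{**}$ with $\Theta_A(b^{-1},c)$ and $\Theta_A(c,b^{-1}g^{-1})$. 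The two corresponding $X_{\Theta_A}$-elements $bc$ and $c^{-1}b^{-1}g^{-1}$ multiply to $g^{-1}$, so $g^{-1}\in X_{\Theta_A}^2$; by symmetry of $X_{\Theta_A}$ this yields $g\in X_{\Theta_A}^2$. Since $X_{\Theta_A}^2$ is $\bigwedge$-definable over $A$ by Lemma \ref{lem:thicks}, this membership transfers from $G^{**}$ back to $G$.

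Combining the inclusions with Lemma \ref{lem:def}(2) gives
\[G^\infty_A \subseteq \stab^G(p) \subseteq X_{\Theta_A}^2 \subseteq \langle X_{\Theta_A}\rangle = G^\infty_A,\]
so all four coincide; in particular $G^\infty_A = \stab^G(p)$ does not depend on $A$, so $G^\infty$ exists and equals $\stab^G(p) = X_\Theta^2$. Being $\bigwedge$-definable of bounded index, $X_\Theta^2$ also equals $G^{00}$. The main obstacle is the trick in the upper inclusion: the naive computation, applying $\sigma$ to $b$ alone, only yields $b^{-1}gb\in X_\Theta^2$, and since $X_\Theta^2$ is not conjugation-invariant this does not transfer to $g$. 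Applying $\sigma$ to $b^{-1}$ instead reverses the multiplication order and produces $g^{-1}$ as a 2-step $\Theta$-product, which is exactly the ``2 steps'' content of the proposition.
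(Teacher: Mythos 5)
Your argument is correct and takes essentially the same route as the paper's proof, which sandwiches $\stab^G(p)$ via $\stab^G(p)\subseteq X_{\underset{G}{\equiv}}\subseteq X_{\Theta_M}^2\subseteq G^{\infty}_M$ and gets the reverse inclusion from the invariant bounded-index group $\bigcap_{f\in\aut(G)}\stab^G(f(p))$. Your explicit $b^{-1}$ computation just spells out the step the paper dismisses as ``easy to see''.
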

\begin{proof}
Let $M \subset G$ be a small model. It is easy to see that (working in bigger monster model $G'\succ G$): \[\stab^G(p) = \{g\in G : g\cdot p = p\} \subseteq X_{\underset{G}{\equiv}} \subseteq X_{\Theta_M}^2 \subseteq G^{\infty}_M.\]
The group $\stab^G(p)$ has bounded index, because $[G:\stab^G(p)] = |G\cdot p|$.
This group is also $\emptyset$-invariant, because $H = \bigcap_{f\in\aut(G)}\stab^G(f(p))$ is in fact an intersection of boundedly many groups ($\aut(G) \cdot p$ is bounded). Thus \[G^{\infty}_{\emptyset} \subseteq H \subseteq\stab^G(p),\] so $G^{\infty}_M = G^{\infty}_{\emptyset} = X_{\Theta_M}^2$ is $\bigwedge$-definable.
\end{proof}

For example if the group $G$ is definably compact, definable in an $o$-minimal expansion of a real closed field, then Proposition \ref{prop:ogr} applies, because by \cite{NIP} $G$ has boundedly many generic types (the property of being generic is preserved under action of $G$ and $\aut(G)$).

\begin{example} \label{ex:s1}
It is not true that under the assumption of Proposition \ref{prop:ogr}, $G^0 = G^{00}$. Consider the circle group $G = S^1=([0,1),+_{(\text{mod } 1)})$ with the structure from the ordered field of reals $\R$. Let $G^*$ be the monster model. $G^*$ is definably compact, so Proposition \ref{prop:ogr} applies and ${G^*}^{\infty} = {G^*}^{00}$ is the subgroup of infinitesimal elements (in a sufficiently saturated extension). However $G^*$ has no subgroups of finite index because is divisible, thus $G^* = {G^*}^{0} \neq {G^*}^{00} = {G^*}^{\infty}$.
\end{example}

\noindent Instytut Matematyczny Uniwersytetu Wroc{\l}awskiego\\
pl. Grunwaldzki 2/4, 50-384 Wroc{\l}aw, Poland\\
and\\
Mathematical Institute of the Polish Academy of Sciences\\
\bigskip
ul. \'Sniadeckich 8, 00-956 Warsaw, Poland\\
e-mail: gismat@math.uni.wroc.pl,\\
\bigskip
www: www.math.uni.wroc.pl/\~{}gismat\\
mailing address:\\
Instytut Matematyczny Uniwersytetu Wroc{\l}awskiego\\
pl. Grunwaldzki 2/4, 50-384 Wroc{\l}aw, Poland
\end{document}